\address[hino@sigmath.es.osaka-u.ac.jp]{Masanori Hino, Graduate School of Engineering Science, Osaka University, Osaka 560-8531, Japan}
\numberwithin{equation}{section}
\newtheorem{thm}{Theorem}[section]
\newtheorem{lem}[thm]{Lemma}
\newtheorem{prop}[thm]{Proposition}
\newtheorem{conj}[thm]{Conjecture}
\theoremstyle{definition}
\newtheorem{definition}[thm]{Definition}
\newtheorem{example}[thm]{Example}
\newtheorem{remark}[thm]{Remark}
\def\rom#1{\mbox{\leavevmode\skip@\lastskip\unskip\/%
           \ifdim\skip@=\z@\else\hskip\skip@\fi{\rm{#1}}}}
\newcommand{\Thm}[1]{Theorem~\ref{th:#1}}
\newcommand{\Lem}[1]{Lemma~\ref{lem:#1}}
\newcommand{\Exam}[1]{Example~\ref{ex:#1}}
\newcommand{\Conj}[1]{Conjecture~\ref{conj:#1}}
\newcommand{\Fig}[1]{Figure~\ref{fig:#1}}
\newcommand{\Eq}[1]{\rom{(\ref{eq:#1})}}
\newcommand{\gm}{\gamma}\newcommand{\dl}{\delta}
\newcommand{\lm}{\lambda}\newcommand{\kp}{\kappa}
\newcommand{\sg}{\sigma}
\newcommand{\om}{\omega}
\newcommand{\Sg}{\Sigma}
\newcommand{\D}{{\Bbb D}}
\newcommand{\N}{{\Bbb N}}
\newcommand{\R}{{\Bbb R}}
\newcommand{\Z}{{\Bbb Z}}
\newcommand{\cE}{{\cal E}}
\newcommand{\cF}{{\cal F}}
\newcommand{\cH}{{\cal H}}
\newcommand{\cL}{{\cal L}}
\newcommand{\cP}{{\cal P}}
\newcommand{\bone}{{\mathbf 1}}
\newcommand{\bfa}{\boldsymbol{a}}
\newcommand{\bfc}{\boldsymbol{c}}
\newcommand{\bfr}{\boldsymbol{r}}
\newcommand{\la}{\langle}\newcommand{\ra}{\rangle}
\newcommand{\wg}{\wedge}
\newcommand{\op}{\mathrm{op}}
\newcommand{\HS}{\mathrm{HS}}
\DeclareMathOperator{\tr}{tr}
\DeclareMathOperator{\rank}{rank}
\newcommand{\lsup}{\mathop{\overline{\lim}}}
\title[Energy measures on Sierpinski gasket type fractals]{Some properties of energy measures\\ on Sierpinski gasket type fractals}
\author[M. Hino]{Masanori Hino\thanks{This research was partially supported by JSPS KAKENHI Grant Number 24540170.}\bigskip\\
{\small Dedicated to Professor Shinzo Watanabe on the occasion of his 80th birthday}}
\begin{document}

\maketitle

\begin{abstract}
We confirm, in a more general framework, a part of the conjecture posed by R.~Bell, C.-W.~Ho, and R.~S.~Strichartz~[Energy measures of harmonic functions on the Sierpi\'nski gasket, Indiana Univ. Math. J. \textbf{63} (2014), 831--868] on the distribution of energy measures for the canonical Dirichlet form on the two-dimensional standard Sierpinski gasket.
\end{abstract}

\begin{classification}
28A80; 31C25, 60B20.
\end{classification}

\begin{keywords}
energy measure, Dirichlet form, Sierpinski gasket, self-similar set.
\end{keywords}

\section{Introduction}
Energy measures associated with strong local regular Dirichlet forms describe certain local structures of Dirichlet forms. For the standard energy form on a Euclidean space, the energy measure of a function $f$ is given explicitly by $|\nabla f(x)|^2\,dx$.
On the other hand, for canonical Dirichlet forms on fractals, energy measures do not usually have simple expressions and it seems a difficult problem to know how they are distributed in the state space. 
For example, energy measures are singular with respect to self-similar measures for self-similar Dirichlet forms on most self-similar fractals~\cite{Hi05,HN06,Ku89,BST99}.
Recently, Bell, Ho, and Strichartz~\cite{BHS} studied several properties of energy measures associated with the canonical Dirichlet form on the two-dimensional standard Sierpinski gasket.
In particular, they introduced non-negative coefficients $\{b_j^{(w)}\}_{j=1}^3$ for describing the distribution of the energy measures of harmonic functions on the cells corresponding to each word $w$ (see Section~2 for details), and posed conjectures about properties of the limiting behavior of $\{b_j^{(w)}\}_{j=1}^3$ as $w$ converges to an infinite sequence of words (\cite[Conjectures~7.1 and 7.2]{BHS}).
In this paper, we confirm a part of the conjecture in a slightly generalized setting.
The proof suggests that the conjectured properties depend strongly on the fractals under consideration having three vertices.
Our approach is more straightforward than the original one~\cite{BHS}: we use only primitive linear operators for the analysis and utilize some results on limits of random matrices (cf.~\cite{BL,Hi08,Hi10,Hi13,Ku89}).

The remainder of this paper is organized as follows. In Section~2, we provide a framework for Dirichlet forms on self-similar sets and give some preliminary results. The conjectures in \cite{BHS} are also stated. 
In Section~3, we prove the main theorem. Section~4 provides some concluding remarks and discussions.
\section{Framework and preliminaries}
We first introduce a class of self-similar sets and the Dirichlet forms defined on them, following~\cite{Ki}.
Let $K$ be a compact, connected, and metrizable space.
Let $\{\psi_i\}_{i\in S}$ be a family of continuous injective mappings from $K$ to itself having a finite index set $S$ with $\#S\ge 2$.
Denote $S^\N$ by $\Sg$ and each element of $\Sg$ by $\om_1\om_2\om_3\cdots$ with $\om_n\in S$ for every $n\in\N$.
For $i\in S$, a shift operator $\sg_i\colon \Sg\to \Sg$ is defined by $\sg_i(\om_1\om_2\cdots)=i\om_1\om_2\cdots$.
We assume that there exists a continuous surjective map $\pi\colon \Sg\to K$ such that $\psi_i\circ \pi=\pi\circ\sg_i$ for each $i\in S$.
The triplet $(K,S,\{\psi_i\}_{i\in S})$ is then called a self-similar structure.
Define $W_m=S^m$ for $m\in\Z_+:=\N\cup\{0\}$ and denote $\bigcup_{m\in\Z_+}W_m$ by $W_*$. For $w=w_1w_2\cdots w_m\in W_m\subset W_*$, we define $\psi_w=\psi_{w_1}\circ\psi_{w_2}\circ\cdots\circ\psi_{w_m}$ and $K_w=\psi_w(K)$.
For $w=w_1\cdots w_m\in W_m$ and $w'=w'_1\cdots w'_n\in W_n$, $ww'$ denotes $w_1\cdots w_m w'_1\cdots w'_n\in W_{m+n}$.
We set 
\[
\cP=\bigcup_{m=1}^\infty \sg^m\left(\pi^{-1}\left(\bigcup_{i,j\in S,\,i\ne j}(K_i\cap K_j)\right)\right)
\quad\mbox{and}\quad
V_0=\pi(\cP),
\]
where $\sg^m\colon \Sg\to\Sg$ is defined by $\sg^m(\om_1\om_2\cdots)=\om_{m+1}\om_{m+2}\cdots$.
We assume that $\cP$ is a finite set. In such a case, $(K,S,\{\psi_i\}_{i\in S})$ is called post-critically finite.
Then, from \cite[Lemma~1.3.14]{Ki}, each $\psi_i$ has a unique fixed point $\pi(iii\cdots)$.

For a finite set $V$, let $l(V)$ denote the space of all real-valued functions on $V$. A canonical inner product $(\cdot,\cdot)_{l(V)}$ on $l(V)$ is defined by $(u,v)_{l(V)}=\sum_{p\in V}u(p)v(p)$.
The associated norm is denoted by $|\cdot|_{l(V)}$.
Let $D=(D_{pq})_{p,q\in V_0}$ be a symmetric linear operator on $l(V_0)$ with the following properties.
\begin{enumerate}[(D1)]
\item $D$ is non-positive definite;
\item $Du=0$ if and only if $u$ is constant on $V_0$;
\item $D_{pq}\ge0$ for all $p,q\in V_0$ with $p\ne q$.
\end{enumerate}
Define $\cE^{(0)}(u,v)=(-Du,v)_{l(V_0)}$ for $u,v\in l(V_0)$.
For $m\in\N$, let $V_m=\break\bigcup_{w\in W_m}\psi_w(V_0)$. 
For $\bfr=\{r_i\}_{i\in S}$ with $r_i>0$ for all $i\in S$, define a bilinear form $\cE^{(m)}$ on $l(V_m)$ by
\[
  \cE^{(m)}(u,v)=\sum_{w\in W_m}\frac1{r_w}\cE^{(0)}(u\circ\psi_w|_{V_0},v\circ\psi_w|_{V_0}),\qquad
  u,v\in l(V_m),
\]
where $r_w=\prod_{i=1}^m r_{w_i}$ for $w=w_1w_2\cdots w_m\in W_m$.
The pair $(D,\bfr)$ is called a harmonic structure if 
\[
\cE^{(0)}(u,u)=\inf\{\cE^{(1)}(v,v)\mid v\in l(V_1),\ v|_{V_0}=u\}
\quad\text{for every $u\in l(V_0)$}.
\] 
Then, for $m\ge0$, the identity 
\[
\cE^{(m)}(u,u)=\inf\{\cE^{(m+1)}(v,v)\mid v\in l(V_{m+1}),\ v|_{V_m}=u\}
\]
holds for every $u\in l(V_{m})$.
If, moreover, $0<r_i<1$ for all $i\in S$, the harmonic structure is called regular.
Henceforth, we assume that a regular harmonic structure $(D,\bfr)$ is given.
Let $\mu$ be a finite Borel measure on $K$ with full support.
Then 
\[
\cE(u,v)= \lim_{m\to\infty}\cE^{(m)}(u|_{V_m},v|_{V_m}),\quad u,v\in\cF
\]
with
\[
\cF=\left\{u\in C(K)\subset L^2(K,\mu)\;\vrule\;
\sup_{m\in\Z_+}\cE^{(m)}(u|_{V_m},u|_{V_m})<\infty\right\}
\]
defines a bilinear form $(\cE,\cF)$ on $L^2(K,\mu)$ associated with $(D,\bfr)$. Under a mild condition on $\mu$, $(\cE,\cF)$ becomes a strong local regular Dirichlet form on $L^2(K,\mu)$ (self-similar measures are adequate, for example; see \cite[Theorem~3.4.6]{Ki}).
We will assume such a $\mu$ is chosen; the choice is not important for subsequent arguments.
We always take continuous functions as $\mu$-versions of elements of $\cF$.
\begin{example}\label{ex:1}
\begin{figure}\centering
\includegraphics[scale=1.2]{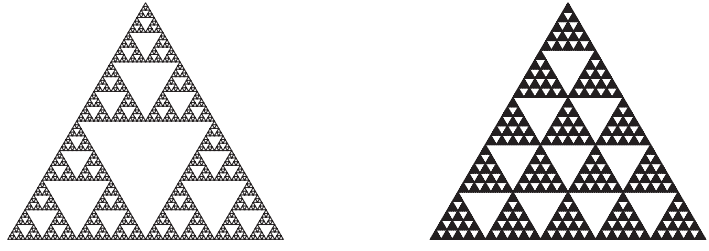}
\caption{Two-dimensional level $l$ Sierpinski gaskets $SG_l$ ($l=2,5$)}
\label{fig:1}
\end{figure}
Typical examples are two-dimensional level $l$ Sierpinski gaskets $SG_l$ for $l\ge2$, which are realized by compact subsets of $\R^2$ (see \Fig{1}). 
They are constructed by $l(l+1)/2$ contraction mappings $\psi_i$ defined as $\psi_i(z)=l^{-1}z+b_{l,i}$ with suitable $b_{l,i}\in \R^2$ and characterized by nonempty compact subsets satisfying $SG_l=\bigcup_{i=1}^{l(l+1)/2}\psi_i(SG_l)$.
We call $SG_2$ the two-dimensional standard Sierpinski gasket.
The set $V_0$ consists of the three vertices $p_1$, $p_2$, and $p_3$ of the largest triangle in $SG_l$.
We renumber $\{\psi_i\}_{i=1}^{l(l+1)/2}$ so that $\psi_i$ denotes the contraction mapping whose fixed point is $p_i$ for $i=1,2,3$ and define the matrix $D$ by 
\begin{equation*}
D=(D_{p_ip_j})_{i,j=1}^3=\begin{bmatrix}-2&1&1\\1&-2&1\\1&1&-2\end{bmatrix}.
\end{equation*}
Then, there exists a unique number $r$ such that $0<r<1$ and $(D,\bfr)$ is a regular harmonic structure with $\bfr=\{r,\dots,r\}$.
This example satisfies the conditions (A1), (A3), and (A4) that are stated later.
If we take the normalized Hausdorff measure as $\mu$, the diffusion process associated with the Dirichlet form as stated above is regarded as the Brownian motion on $SG_l$.
\end{example}
We now resume our discussion of the general situation.
For each $x\in l(V_0)$, there exists a unique function $h\in\cF$ such that $h|_{V_0}=x$ and $h$ attains the infimum of $\{\cE(g,g)\mid g\in\cF\mbox{ and } g|_{V_0}=x\}$.
Such a function $h$ is called a harmonic function and the totality of $h$ is denoted by $\cH$.
The map $\iota\colon l(V_0)\ni x\mapsto h\in \cH$ is linear, so we can identify $\cH$ with $l(V_0)$ by this map.
For $i\in S$, we define a linear operator $A_i\colon l(V_0)\to l(V_0)$ by $(A_i x)(p)=(\iota(x))(\psi_i(p))$ for $x\in l(V_0)$ and $p\in V_0$.
For $w=w_1w_2\cdots w_m\in W_m$, we set $A_w=A_{w_m}A_{w_{m-1}}\cdots A_{w_1}$.
With an abuse of notation, $D$ and $A_w$ can also be considered as linear maps from $\cH$ to $l(V_0)$ by identifying $\cH$ with $l(V_0)$.
Let $\bone\in l(V_0)$ denote a constant function on $V_0$ with value $1$.
Set 
\[
\tilde l(V_0)=\{x\in l(V_0)\mid (x,\bone)_{l(V_0)}=0\}
\]
and let $P\colon l(V_0)\to l(V_0)$ be the orthogonal projection onto $\tilde l(V_0)$.
\begin{lem}\label{lem:D}
The range of $D$ is $\tilde l(V_0)$.
\end{lem}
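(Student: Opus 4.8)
The plan is to reduce the statement to a standard fact about symmetric operators on finite-dimensional inner product spaces. First I would invoke the symmetry of $D$ with respect to the canonical inner product: since $D$ is symmetric, $D=D^*$, and the general identity $\mathrm{range}(D)=(\ker D^*)^\perp$ specializes to $\mathrm{range}(D)=(\ker D)^\perp$. Equivalently, one has the orthogonal direct sum decomposition $l(V_0)=\ker D\oplus\mathrm{range}(D)$, so it suffices to compute the kernel and take its orthogonal complement.

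Next I would identify $\ker D$. This is immediate from property (D2): $Du=0$ holds exactly when $u$ is constant on $V_0$, so $\ker D=\R\bone$, the one-dimensional subspace spanned by $\bone$. Combining this with the previous step, the range of $D$ equals $(\R\bone)^\perp$. Finally, by definition of the inner product, $x\in(\R\bone)^\perp$ iff $(x,\bone)_{l(V_0)}=0$, which is precisely the condition defining $\tilde l(V_0)$. Hence $\mathrm{range}(D)=\tilde l(V_0)$, as claimed.

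I expect no substantial obstacle here: the proof is essentially a restatement of the orthogonal decomposition associated with a self-adjoint operator, combined with the explicit description of $\ker D$ furnished by (D2). The only points requiring care are that the relevant inner product is the canonical one $(\cdot,\cdot)_{l(V_0)}$, with respect to which "symmetric" means self-adjoint, and that a dimension count (or the direct-sum decomposition) confirms equality rather than mere inclusion. It is worth noting that neither (D1) nor (D3) is needed for this particular lemma; only the symmetry of $D$ and property (D2) enter the argument.
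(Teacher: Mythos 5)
Your proof is correct and is essentially the same as the paper's: the paper shows $(Dx,\bone)_{l(V_0)}=(x,D\bone)_{l(V_0)}=0$ to get $D(l(V_0))\subset\tilde l(V_0)$ and then concludes by a dimension count, which is exactly the symmetry-plus-(D2) argument you package as the standard identity $\mathrm{range}(D)=(\ker D)^{\perp}$ with $\ker D=\R\bone$. Your closing remark that only symmetry and (D2) are used also matches the paper's proof.
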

\begin{proof}
For $x\in l(V_0)$,
\[
(Dx,\bone)_{l(V_0)}=(x,D\bone)_{l(V_0)}=0.
\]
Therefore, $D(l(V_0))\subset \tilde l(V_0)$.
Since the dimensions of $D(l(V_0))$ and $\tilde l(V_0)$ are both $\# V_0-1$, we obtain the result.
\end{proof}

For $f\in\cF$, $\nu_f$ denotes the energy measure of $f$ (cf.~\cite{FOT}); in our situation, $\nu_f$ is the unique finite Borel measure on $K$ such that 
\[
    \int_K  g\, d\nu_{f}= 2\cE(f,fg)- \cE(f^2,g)
    \quad \text{for all }g\in \cF.
\]  
In particular, the energy measures of the constant functions are the zero measure in our framework.
From the general theory, it is known that every energy measure for strong local regular Dirichlet forms does not have a point mass (from the energy image density property; see, e.g., \cite[Theorem~4.3.8]{CF} or  \cite[Theorem~I.7.1.1]{BH} for the proof).
Therefore, energy measures $\nu_f$ do not have a mass on the countable set $V_*:=\bigcup_{m\in\Z_+}V_m$ for any $f\in\cF$.
A concrete expression for $\nu_f$ with $f\in\cH$ can be provided as follows.
\begin{lem}[cf.~{\cite[Lemma~4]{HN06}}]\label{lem:b}
For any $f\in\cH$ and $w\in W_*$,
\begin{equation}\label{eq:nu}
\nu_f(K_w)=-\frac2{r_w}\,^t(A_w f)D(A_w f).
\end{equation}
\end{lem}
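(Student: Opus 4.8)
The plan is to reduce $\nu_f(K_w)$ for harmonic $f$ to the total mass of the energy measure of a harmonic function on all of $K$, by exploiting the self-similarity of $\cE$. The key intermediate step is the scaling identity
\[
\nu_f(K_w)=\frac1{r_w}\,\nu_{f\circ\psi_w}(K),
\qquad f\in\cF,\ w\in W_*.
\]
To obtain it, I would insert the self-similarity relation $\cE(u,v)=\sum_{i\in S}r_i^{-1}\cE(u\circ\psi_i,v\circ\psi_i)$ into both terms of the defining identity $\int_K g\,d\nu_f=2\cE(f,fg)-\cE(f^2,g)$. Because $(fg)\circ\psi_i=(f\circ\psi_i)(g\circ\psi_i)$ and $(f^2)\circ\psi_i=(f\circ\psi_i)^2$, the right-hand side collapses to $\sum_{i\in S}r_i^{-1}\int_K(g\circ\psi_i)\,d\nu_{f\circ\psi_i}$, so that $\nu_f=\sum_{i\in S}r_i^{-1}(\psi_i)_*\nu_{f\circ\psi_i}$ as Borel measures. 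Writing $w=iw'$ and applying this decomposition, the contribution of the index $i$ restricts to $r_i^{-1}\nu_{f\circ\psi_i}(K_{w'})$ since $\psi_i$ is injective, while for $j\ne i$ the set $\psi_j^{-1}(K_w)$ is finite by post-critical finiteness and hence carries no energy measure; induction on the length of $w$ then yields the displayed identity.

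Next I would evaluate the right-hand side when $f$ is harmonic. Putting $g=\bone$ in the defining identity and using $\cE(h^2,\bone)=0$ gives, for every $h\in\cH$,
\[
\nu_h(K)=2\cE(h,h)=2\cE^{(0)}(h|_{V_0},h|_{V_0})=-2\,^t(h|_{V_0})D(h|_{V_0}),
\]
where the second equality holds because $h$ is harmonic. It remains to recognize $f\circ\psi_w$ as a harmonic function: since the restriction of a harmonic function to a cell is again harmonic, $f\circ\psi_w=\iota(A_wf)$, whose boundary values $(f\circ\psi_w)|_{V_0}=A_wf$ are exactly what the operators $A_i$ and the composition rule $A_w=A_{w_m}\cdots A_{w_1}$ encode. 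Substituting $h=\iota(A_wf)$ into the total-mass formula and then into the scaling identity gives
\[
\nu_f(K_w)=\frac1{r_w}\bigl(-2\,^t(A_wf)D(A_wf)\bigr)=-\frac2{r_w}\,^t(A_wf)D(A_wf),
\]
which is the assertion.

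I expect the main obstacle to lie in the careful justification of the scaling identity rather than in the closing algebra. Two points deserve attention: passing from the functional equality, valid a priori only for test functions $g\in\cF\cap C(K)$, to the equality of the Borel measures $\nu_f$ and $\sum_{i}r_i^{-1}(\psi_i)_*\nu_{f\circ\psi_i}$, which requires that $\cF\cap C(K)$ be measure-determining; and the vanishing of $\nu_{f\circ\psi_j}$ on the finite overlap sets in the inductive step, which is precisely the absence of point masses recorded just before the statement. Once these are settled, the harmonicity of $f\circ\psi_w$ and the evaluation of $\nu_h(K)$ for $h\in\cH$ are immediate.
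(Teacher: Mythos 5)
Your proof is correct. Note that the paper itself contains no proof of this lemma---it is quoted from \cite[Lemma~4]{HN06}---and your argument is essentially the standard one behind that citation: the self-similar decomposition $\nu_f=\sum_{i\in S}r_i^{-1}(\psi_i)_*\nu_{f\circ\psi_i}$ (valid by the self-similarity of $\cE$ and the uniform density of $\cF$ in $C(K)$ coming from regularity), the vanishing of energy measures on the finite overlaps $K_i\cap K_j$ (no point masses, as recorded just before the statement), the total-mass identity $\nu_h(K)=2\cE(h,h)$, and the identification of $f\circ\psi_w$ as the harmonic function $\iota(A_wf)$, whose energy is $\cE^{(0)}(A_wf,A_wf)=-\,{}^t(A_wf)D(A_wf)$.
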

We remark that the right-hand side of \Eq{nu} is also described as $(2/{r_w})\cE(f\circ\psi_w,f\circ\psi_w)$. This expression justifies an intuitive meaning of $\nu_f(K_w)$ as a ``local energy of $f$ on $K_w$.''\footnote{Compare this expression also with the following classical situation: for a Dirichlet form $(Q,H^1(\R^d))$ on $L^2(\R^d,dx)$, where $Q(f,g)=(1/2)\int_{\R^d}(\nabla f,\nabla g)_{\R^d}\,dx$ for $f,g\in H^1(\R^d)$, the energy measure of $f\in H^1(\R^d)$ is given by $|\nabla f|_{\R^d}^2\,dx$.}

The mutual energy measure $\nu_{f,g}$ for $f$ and $g$ in $\cF$ is a signed measure on $K$ defined by $\nu_{f,g}=(\nu_{f+g}-\nu_f-\nu_g)/2$.
For every Borel subset $B$ of $K$, the inequality
\begin{equation}\label{eq:CS}
|\nu_{f,g}(B)|^2\le \nu_f(B)\nu_g(B)
\end{equation}
holds.

We assume the following condition.
\begin{itemize}
\item[(A1)]
Each point of $V_0$ is a fixed point of some $\psi_j$.
More precisely, there exists a subset $S_0$ of $S$ such that $\#S_0=\#V_0$ and, for each $p\in V_0$, there exists $j\in S_0$ such that $\psi_j(p)=p$.
Furthermore, $K\setminus\{p\}$ is a connected set for every $p\in V_0$.
\end{itemize}
Under this condition, we have the following two lemmas.
\begin{lem}[cf.~{\cite[Theorem~A.1.2]{Ki}} and {\cite[Lemma~5]{HN06}}]\label{lem:A}
  Let $j\in S_0$.
  Take $p\in V_0$ such that $\psi_j(p)=p$ and
  let $u_j$ denote the column vector $(D_{qp})_{q\in V_0}$. Then
  \begin{enumerate}
  \item    $r_j$ is a simple eigenvalue of $A_j$ and $\,^t\!A_j$. Moreover, the modulus of all the eigenvalues of $A_j$ and $\,^t\!A_j$ other than $1$ and $r_j$ are less than $r_j$.
  \item The vector $u_j$ belongs to $\tilde l(V_0)$ and is an eigenvector of $\,^t\!A_j$ with respect to the eigenvalue $r_j$.
  \item There is a unique eigenvector $v_j$ of $A_j$ with respect to the eigenvalue $r_j$ such that $(u_j,v_j)_{l(V_0)}=1$. Moreover, every component of $v_j$ is non-negative.
\end{enumerate}
\end{lem}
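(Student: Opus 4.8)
The plan is to treat the three assertions in order, deriving everything from two facts: the ``renormalization'' identity satisfied by the harmonic structure, and the Perron--Frobenius theorem applied to the nonnegative matrix $A_j$.

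First, for assertion~(2). Writing $\delta_p\in l(V_0)$ for the indicator of the fixed point $p$, one has $u_j=D\delta_p$, so $u_j$ lies in the range of $D$, which equals $\tilde l(V_0)$ by \Lem{D}; this gives the membership claim. For the eigenvector claim, since $D=\,^t\!D$ the identity $\,^t\!A_j u_j=r_j u_j$ is equivalent to the scalar ``flux'' identity $(DA_j x)_p=r_j(Dx)_p$ for all $x\in l(V_0)$. I would prove this by a renormalization argument. Let $h=\iota(x)$; the energy-minimizing property of $h$ at level one gives the matching relation $\cE^{(1)}(h|_{V_1},g)=(-Dx,g|_{V_0})_{l(V_0)}$ for every $g\in l(V_1)$. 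Taking $g=\delta_p$ and expanding the left-hand side by self-similarity yields $\sum_{i\in S}r_i^{-1}(-D(A_i x),\delta_p\circ\psi_i|_{V_0})_{l(V_0)}$. The factor $\delta_p\circ\psi_i|_{V_0}$ is the indicator of $\{q\in V_0\mid\psi_i(q)=p\}$, and here the post-critical finiteness together with (A1) ensures that the fixed point $p$ is a vertex of the single cell $K_j$, so only the term $i=j$ survives and it equals $\delta_p$. Comparing the two expressions gives $(Dx)_p=r_j^{-1}(DA_j x)_p$, which is the desired identity. The verification that $p$ meets only $K_j$ is the delicate point of this step.

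Next, for assertion~(1). Because the harmonic extension is a nonnegative averaging of boundary values, $A_j$ has nonnegative entries, and $A_j\bone=\bone$ shows it is row-stochastic. Let $W=\{x\in l(V_0)\mid x(p)=0\}$; since the $p$-th row of $A_j$ equals $\,^t\!\delta_p$, both $\R\bone$ and $W$ are $A_j$-invariant and $l(V_0)=\R\bone\oplus W$. As $K_{j^n}$ shrinks to $\{p\}$ and harmonic functions are continuous, $A_j^n x\to x(p)\bone$, i.e.\ $A_j^n\to\bone\,^t\!\delta_p$, a rank-one limit; hence $A_j|_W$ has spectral radius strictly less than $1$ and the eigenvalue $1$ of $A_j$ is simple. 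It remains to locate the spectral radius of $A_j|_W$ at $r_j$ and to establish the gap. Viewing $A_j|_W$ as a nonnegative matrix on $l(V_0\setminus\{p\})$, I would use the connectivity of $K\setminus\{p\}$ from (A1) to show it is irreducible; the Perron--Frobenius theorem then furnishes its spectral radius $\rho$ as a simple eigenvalue with a strictly positive eigenvector $v$. Pairing with $u_j$ from (2) gives $\rho(u_j,v)_{l(V_0)}=(u_j,A_j v)_{l(V_0)}=(\,^t\!A_j u_j,v)_{l(V_0)}=r_j(u_j,v)_{l(V_0)}$, and since $(u_j,v)_{l(V_0)}=\sum_{q\ne p}D_{qp}v(q)>0$ I conclude $\rho=r_j$. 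Thus $r_j$ is a simple eigenvalue of $A_j$, and showing that no further eigenvalue has modulus $r_j$ --- i.e.\ primitivity of $A_j|_W$ --- is the main obstacle; I expect to obtain it from the same connectivity, via aperiodicity of the associated graph.

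Finally, assertion~(3) follows by assembling the previous steps. Simplicity of $r_j$ from (1) makes the eigenspace one-dimensional; the Perron vector $v$ above, which lies in $W$ and is positive off $p$, provides a nonnegative eigenvector, and the strict positivity of $(u_j,v)_{l(V_0)}$ just computed allows the unique normalization $v_j=v/(u_j,v)_{l(V_0)}$ with $(u_j,v_j)_{l(V_0)}=1$, still nonnegative. The nonvanishing of $(u_j,v)_{l(V_0)}$ --- the biorthogonality of the left and right eigenvectors of the simple eigenvalue $r_j$ --- is exactly what guarantees both existence and uniqueness of the normalized $v_j$.
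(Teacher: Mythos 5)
The paper never proves \Lem{A}: it is imported from the literature (Kigami's Theorem~A.1.2 and Lemma~5 of Hino--Nakahara), so your attempt can only be measured against that cited argument --- whose architecture, for what it is worth, you have correctly reconstructed: a flux identity produces the left eigenvector $u_j$, the nonnegative stochastic structure of $A_j$ plus a Perron--Frobenius argument on the block indexed by $V_0\setminus\{p\}$ handles assertion (1), and biorthogonal normalization gives (3). Several steps of yours are complete and correct: the matching relation $\cE^{(1)}(h|_{V_1},g)=(-Dx,g|_{V_0})_{l(V_0)}$, the identification $u_j=D\delta_p$ combined with \Lem{D}, the observation that the $p$-row of $A_j$ is ${}^t\delta_p$, and the rank-one limit $A_j^n\to\bone\,{}^t\delta_p$ giving simplicity of the eigenvalue $1$. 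Also, the ``delicate point'' you flag in (2) is true and admits a two-line proof you could have supplied: if $p\in\psi_i(V_0)$ for some $i\ne j$, then $p\in K_i\cap K_j$, so every address of $p$ lies in the critical set; since $\psi_j(p)=p$, prepending $j$'s to an address of $p$ beginning with $i$ yields infinitely many distinct addresses of $p$, and shifting them shows $\cP$ would be infinite, contradicting post-critical finiteness. (The same argument gives $K_j\cap V_0=\{p\}$, which you need later when you view $A_j|_W$ as a matrix on $l(V_0\setminus\{p\})$.)

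The genuine gap sits exactly where you write ``I would use the connectivity\dots to show it is irreducible'' and ``I expect to obtain it\dots via aperiodicity'': neither irreducibility nor primitivity of the block $B=A_j|_W$ is proved, and this is the mathematical core of the lemma rather than a routine verification. It cannot be side-stepped: if the connectivity hypothesis in (A1) is dropped (informally, take $K$ to be two gaskets glued at $p$, with $K_j$ a scaled copy of $K$ at $p$), then $B$ is block diagonal and $r_j$ is a double eigenvalue, so the conclusion fails; hence any proof must convert the topological hypothesis ``$K\setminus\{x\}$ is connected'' into positivity of entries of $B$, i.e.\ of the values $h_{q'}(\psi_j(q))$ of the harmonic functions $h_{q'}$ with boundary data $\delta_{q'}$. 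That conversion is real work: one needs the strong maximum principle on the approximating networks (a nonnegative harmonic function vanishing at a point of $V_*\setminus V_0$ vanishes on an entire component of $K\setminus V_0$), the fact that $K_j\setminus\{p\}=\psi_j(K\setminus\{p\})$ is connected by (A1) and hence lies in a single such component, and the fact that the cells $K_{j^n}$ form a neighborhood basis of $p$, so that a single vanishing entry forces a whole column of $B$ to vanish and ultimately disconnects $K$ after removal of one point of $V_0$. Note also that your pairing argument identifying the Perron root with $r_j$ itself presupposes this: without irreducibility the nonnegative Perron vector $v$ need not be strictly positive and $(u_j,v)_{l(V_0)}$ could vanish. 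In short, (2) and (3) and the linear-algebraic frame of (1) are essentially right, but the crux of (1) --- which is precisely the content of the cited Theorem~A.1.2 --- is asserted rather than proved.
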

\begin{lem}[cf.~{\cite[Lemma~6]{HN06}}]\label{lem:a}
For $j\in S_0$ and $x\in l(V_0)$, 
\[
\lim_{n\to\infty} r_j^{-n}P A_j^n x=(u_j,x)_{l(V_0)}Pv_j.
\]
\end{lem}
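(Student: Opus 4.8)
The plan is to reduce the projected iteration $P A_j^n$ to a power iteration of a single operator on the hyperplane $\tilde l(V_0)$ whose dominant eigenvalue is exactly $r_j$. First I would record the elementary fact that $A_j\bone=\bone$ (the harmonic extension of a constant is constant), so that $\bone$ spans an eigenspace of $A_j$ for the eigenvalue $1$. Since $P\bone=0$, this gives $PA_j=PA_jP$, and an easy induction then yields $PA_j^n=B^nP$ for every $n$, where $B:=PA_jP$ is regarded as an operator on $\tilde l(V_0)$. Thus it suffices to analyse $r_j^{-n}B^n$ on $\tilde l(V_0)$ and to show that it converges to the rank-one operator $y\mapsto (u_j,y)_{l(V_0)}\,Pv_j$.

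Next I would pin down the spectrum of $B$. Because $\R\bone$ is a one-dimensional $A_j$-invariant subspace on which $A_j$ acts as the identity, $A_j$ descends to the quotient $l(V_0)/\R\bone$, and the natural isomorphism $\tilde l(V_0)\to l(V_0)/\R\bone$ carries $B$ to this induced map; hence the eigenvalues of $B$ are exactly those of $A_j$ with one copy of the eigenvalue $1$ deleted. By \Lem{A} the eigenvalue $1$ is simple and everything remaining is the simple eigenvalue $r_j$ together with eigenvalues of modulus strictly less than $r_j$. Therefore $r_j$ is the strictly dominant, simple eigenvalue of $B$, and the standard power-iteration statement for a matrix with a simple dominant eigenvalue gives $r_j^{-n}B^n\to Q$, where $Q$ is the spectral projection of $B$ onto its $r_j$-eigenspace.

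It then remains to identify $Q$. I would check directly that $Pv_j$ is a right $r_j$-eigenvector of $B$ (using $A_jv_j=r_jv_j$, $A_j\bone=\bone$, and $P\bone=0$; note $Pv_j\ne0$ since $v_j\notin\R\bone$) and that the functional $y\mapsto (u_j,y)_{l(V_0)}$ is a left $r_j$-eigenvector (using $Pu_j=u_j$ because $u_j\in\tilde l(V_0)$, self-adjointness of $P$, and $\,^t\!A_ju_j=r_ju_j$ from \Lem{A}). Since $(u_j,Pv_j)_{l(V_0)}=(u_j,v_j)_{l(V_0)}=1$ by the normalization in \Lem{A}, the projection is the rank-one operator $Q(y)=(u_j,y)_{l(V_0)}\,Pv_j$; evaluating at $y=Px$ and using $Pu_j=u_j$ once more yields the claimed limit.

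The main obstacle is the control of the top of the spectrum. The eigenvalue $1$ of $A_j$ exceeds $r_j$, so the factor $r_j^{-n}\to\infty$ would amplify any surviving component in that direction; the projection $P$ is precisely what removes it, but this works only because the eigenvalue $1$ is simple with eigenspace $\R\bone=\ker P$. Were the eigenvalue-$1$ generalized eigenspace larger than $\R\bone$, a component not annihilated by $P$ would persist and the rescaled iteration would diverge. Hence the crux of the argument is the input from \Lem{A} (together with $A_j\bone=\bone$) that this generalized eigenspace is exactly $\R\bone$; granting this, the contribution of the eigenvalues of modulus $<r_j$ decays geometrically and the remaining estimates are routine.
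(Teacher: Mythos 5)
The paper itself does not prove \Lem{a}: it is imported wholesale from \cite[Lemma~6]{HN06}, so there is no in-paper argument to compare yours against, and it must be judged on its own. Most of it checks out. The reduction is correct: $A_j\bone=\bone$ and $P\bone=0$ give $PA_j=PA_jP$, hence $PA_j^n=B^nP$ with $B=PA_jP$ on $\tilde l(V_0)$; the isomorphism $\tilde l(V_0)\cong l(V_0)/\R\bone$ does intertwine $B$ with the map induced by $A_j$ on the quotient, so the spectrum of $B$ is that of $A_j$ with one copy of the eigenvalue $1$ deleted; and the identification of the limiting projection is correct, since $B(Pv_j)=r_jPv_j$ with $Pv_j\ne0$, ${}^t\!Bu_j=r_ju_j$, $(u_j,Pv_j)_{l(V_0)}=(u_j,v_j)_{l(V_0)}=1$, and $(u_j,Px)_{l(V_0)}=(u_j,x)_{l(V_0)}$.

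The genuine gap is the sentence ``by \Lem{A} the eigenvalue $1$ is simple.'' \Lem{A}(1) asserts only that $r_j$ is simple and that all eigenvalues \emph{other than $1$ and $r_j$} have modulus less than $r_j$; it is silent on the multiplicity of the eigenvalue $1$. A matrix with a $2\times2$ Jordan block at $1$ (eigenvector $\bone$) together with a simple eigenvalue $r_j$ is compatible with everything stated in \Lem{A}, with $A_j\bone=\bone$, and with (A2) --- and, as you yourself observe, for such a matrix the conclusion of \Lem{a} is \emph{false}: $A_jy=y+\bone$ gives $PA_j^ny=Py\ne0$, so $r_j^{-n}PA_j^ny$ diverges. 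So the simplicity of $1$ is precisely the point that cannot be attributed to \Lem{A}; it needs its own justification. Two ways to supply it: (i) invoke \cite[Theorem~A.1.2]{Ki} --- the source referenced by \Lem{A} --- whose spectral analysis under (A1) does include the simplicity of the eigenvalue $1$; or (ii) argue directly: since $h\circ\psi_j\in\cH$ whenever $h\in\cH$, one has $(A_j^nx)(q)=\iota(x)(\psi_j^n(q))$, and the cells $K_{j^n}$ decrease to the single point $\pi(jj\cdots)=:p$, so by continuity of harmonic functions $A_j^nx\to x(p)\bone$ for every $x\in l(V_0)$. This shows both that every $1$-eigenvector of $A_j$ lies in $\R\bone$ and that no generalized eigenvector sits above it (such a $y$ would satisfy $A_jy=y+c\bone$ with $c\ne0$, hence $A_j^ny=y+nc\bone$, contradicting convergence). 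With either patch inserted, your proof is complete and is a legitimate self-contained replacement for the citation to \cite{HN06}.
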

Let $h_1,\dots,h_N$ $(N\in\N)$ be a finite number of harmonic functions such that $\cH$ is spanned by $h_1,\dots,h_N$ and constant functions.
We denote $\sum_{k=1}^N \nu_{h_k}$ by $\nu$.
\begin{lem}\label{lem:abs}
For every $f\in\cH$, $\nu_f$ is absolutely continuous with respect to $\nu$.
\end{lem}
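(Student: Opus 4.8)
The plan is to combine the bilinearity of the mutual energy measure with the Cauchy--Schwarz-type inequality \Eq{CS}. Since $\cH$ is spanned by $h_1,\dots,h_N$ together with the constant functions, I would first write $f=c+\sum_{k=1}^N c_k h_k$ for some real coefficients $c_1,\dots,c_N$ and some constant function $c$. The energy measure of $c$ is the zero measure, and the mutual energy measure $\nu_{c,g}$ of $c$ with any $g\in\cF$ vanishes as well, since \Eq{CS} yields $|\nu_{c,g}(B)|^2\le\nu_c(B)\nu_g(B)=0$ for every Borel set $B$. Consequently $\nu_f=\nu_g$, where $g:=\sum_{k=1}^N c_k h_k$.

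Next, I would fix an arbitrary Borel set $B\subset K$ with $\nu(B)=0$ and aim to show $\nu_f(B)=0$; because $\nu_f$ is a non-negative measure, this is exactly the assertion $\nu_f\ll\nu$. As $\nu=\sum_{k=1}^N\nu_{h_k}$ is a finite sum of non-negative measures, the hypothesis $\nu(B)=0$ forces $\nu_{h_k}(B)=0$ for each $k$. Expanding by bilinearity,
\[
\nu_g(B)=\sum_{j,k=1}^N c_jc_k\,\nu_{h_j,h_k}(B),
\]
and applying \Eq{CS} to each summand gives $|\nu_{h_j,h_k}(B)|^2\le\nu_{h_j}(B)\nu_{h_k}(B)=0$. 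Hence every term vanishes and $\nu_f(B)=\nu_g(B)=0$, as desired.

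I do not anticipate a genuine obstacle: once \Eq{CS} is in hand the argument is essentially formal, resting only on the non-negativity of the $\nu_{h_k}$ and the bilinearity of $\nu_{\cdot,\cdot}$. The one place deserving care is the reduction $\nu_f=\nu_g$, namely the claim that adding a constant function leaves the energy measure unchanged; but this too reduces to the vanishing of mutual energy measures against constants through \Eq{CS}.
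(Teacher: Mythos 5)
Your proposal is correct and follows essentially the same route as the paper: decompose $f$ into a constant plus a combination of the $h_k$, expand $\nu_f$ by bilinearity into mutual energy measures $\nu_{h_j,h_k}$, and apply the Cauchy--Schwarz-type inequality \Eq{CS} termwise against the non-negative measures $\nu_{h_k}\le\nu$. The paper merely compresses the steps you spell out (the vanishing of terms involving the constant and the termwise estimate), so there is no substantive difference.
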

\begin{proof}
For some $\alpha_i\in\R$ $(i=1,\dots,N)$ and $\beta\in\R$, $f$ can be written as $f=\sum_{i=1}^N \alpha_i h_i+\beta$.
Then, $\nu_f=\sum_{i,j=1}^N \alpha_i\alpha_j\nu_{h_i,h_j}$.
By \Eq{CS}, this is absolutely continuous with respect to $\nu$.
\end{proof}
We further assume the following condition.
\begin{itemize}
\item[(A2)] For every $i\in S$, $A_i$ is invertible.
\end{itemize}
For $SG_l$ with the canonical harmonic structure in \Exam{1}, (A2) has been confirmed for $l\le 50$ by numerical computation (cf.~\cite[p.~297]{Hi10}).
It is conjectured that it is true for all $l\ge2$.
\begin{lem}\label{lem:easy}
For $j\in S_0$ and $w\in W_*$, $^{t\!}A_w u_j$ belongs to $\tilde l(V_0)$ and is nonzero. 
\end{lem}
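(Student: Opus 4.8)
The plan is to separate the statement into two independent claims---membership in $\tilde l(V_0)$ and non-vanishing---and to dispatch each by a short computation, the first resting on the behaviour of $A_w$ on constants and the second on the invertibility assumption (A2).

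First I would record that $A_w\bone=\bone$ for every $w\in W_*$. The harmonic extension $\iota(\bone)$ of the boundary value $\bone$ is the constant function $1$ on $K$, so for each $i\in S$ and $p\in V_0$ we have $(A_i\bone)(p)=(\iota(\bone))(\psi_i(p))=1$; that is, $A_i\bone=\bone$, and composing these identities gives $A_w\bone=\bone$. The membership claim then follows by adjunction, using $u_j\in\tilde l(V_0)$ from \Lem{A}:
\[
(\,^t\!A_w u_j,\bone)_{l(V_0)}=(u_j,A_w\bone)_{l(V_0)}=(u_j,\bone)_{l(V_0)}=0,
\]
so $\,^t\!A_w u_j\in\tilde l(V_0)$.

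For the non-vanishing I would invoke (A2). Since each $A_i$ is invertible, $A_w=A_{w_m}\cdots A_{w_1}$ is a composition of invertible maps and is therefore invertible, and hence so is $\,^t\!A_w$. As $u_j$ is an eigenvector of $\,^t\!A_j$ by \Lem{A}, it is in particular nonzero, and an invertible linear map sends a nonzero vector to a nonzero vector; thus $\,^t\!A_w u_j\ne0$.

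Both parts reduce to one-line verifications, so I do not expect a genuine obstacle here (the label \emph{easy} is apt). The only essential input is (A2), without which $\,^t\!A_w u_j$ could in principle vanish for some $w$; I would therefore stress that (A2) is exactly what rules this out, while noting that the membership part is entirely elementary and independent of (A2).
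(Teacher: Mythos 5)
Your proof is correct and takes essentially the same route as the paper's: the paper's entire argument is the one-line remark that the claim follows from the identity $A_w\bone=\bone$ and Condition (A2), which are exactly the two ingredients you elaborate (adjunction against $\bone$ to get membership in $\tilde l(V_0)$, and invertibility of $A_w$, hence of $\,^t\!A_w$, applied to the nonzero vector $u_j$ for the non-vanishing). Your write-up simply makes explicit the details the paper leaves to the reader.
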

\begin{proof}
This is clear because of the identity $A_w\bone=\bone$ and Condition (A2).
\end{proof}

For $j\in S_0$ and $w\in W_*$, let 
\[
a_j^{(w)}=\sum_{k=1}^N (u_j,A_w h_k)_{l(V_0)}^2.
\]
From \Lem{easy}, $a_j^{(w)}$ is strictly positive due to the choice of $\{h_k\}_{k=1}^N$.

We have some explicit information on the Radon--Nikodym derivative $d\nu_f/d\nu$ for harmonic functions $f$.
\begin{lem}\label{lem:jjj}
For any $f\in \cH$, $w\in W_*$, and $j\in S_0$, 
\[
\lim_{n\to\infty}\frac{\nu_f(K_{wj^n})}{\nu(K_{wj^n})}=\frac{(u_j,A_w f)_{l(V_0)}^2}{a_j^{(w)}},
\]
where $K_{wj^n}$ denotes $K_{w\underbrace{\scriptstyle j\dots j}_{n}}$.
\end{lem}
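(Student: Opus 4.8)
The plan is to combine the quadratic-form expression in \Lem{b} with the asymptotics in \Lem{a}. Since $wj^n$ ends in $n$ copies of $j$, the composition rule for the operators $A_w$ gives $A_{wj^n}=A_j^nA_w$, while $r_{wj^n}=r_wr_j^n$; hence \Eq{nu} reads
\[
\nu_f(K_{wj^n})=-\frac{2}{r_wr_j^n}\,{}^t(A_j^nA_wf)\,D\,(A_j^nA_wf).
\]
Writing $y=A_wf$, I would first rewrite the quadratic form in terms of the projected vector $PA_j^ny$. Because $D\bone=0$ we have $D(A_j^ny)=D(PA_j^ny)$, and because the range of $D$ is $\tilde l(V_0)$ by \Lem{D}, the vector $-D(PA_j^ny)$ is orthogonal to $\bone$; decomposing $A_j^ny=PA_j^ny+c_n\bone$ the cross term therefore drops and
\[
-\,{}^t(A_j^ny)\,D\,(A_j^ny)=\bigl(-D(PA_j^ny),\,PA_j^ny\bigr)_{l(V_0)}.
\]

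Next I would normalize by $r_j^{2n}$ and apply \Lem{a}, which gives $r_j^{-n}PA_j^ny\to(u_j,y)_{l(V_0)}Pv_j$. Since $l(V_0)$ is finite-dimensional, $D$ and the inner product are continuous, so
\[
r_j^{-2n}\bigl(-D(PA_j^ny),PA_j^ny\bigr)_{l(V_0)}\longrightarrow(u_j,y)_{l(V_0)}^2\,c_j,
\qquad c_j:=\bigl(-Dv_j,v_j\bigr)_{l(V_0)},
\]
using $D(Pv_j)=Dv_j$ and $-Dv_j\perp\bone$ once more. Consequently $\nu_f(K_{wj^n})=(2r_j^n/r_w)\bigl[(u_j,A_wf)_{l(V_0)}^2c_j+o(1)\bigr]$, and the same expansion holds for each $\nu_{h_k}$ with $f$ replaced by $h_k$.

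Finally I would form the ratio. Summing over $k=1,\dots,N$ yields the expansion of $\nu(K_{wj^n})$, and in the quotient the common prefactor $2r_j^nc_j/r_w$ cancels, leaving $(u_j,A_wf)_{l(V_0)}^2/a_j^{(w)}$ in the limit. The step that needs genuine care — the main obstacle — is ensuring the limiting denominator is nonzero, so that the quotient of limits is valid. This rests on two positivity facts: first $c_j>0$, which holds because $v_j$ is an eigenvector of $A_j$ for the eigenvalue $r_j\ne1$ and is thus non-constant, whence $\cE^{(0)}(v_j,v_j)=(-Dv_j,v_j)_{l(V_0)}>0$ by (D2); and second $a_j^{(w)}>0$, which is exactly the strict positivity recorded after \Lem{easy}. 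When $(u_j,A_wf)_{l(V_0)}=0$ the numerator is $o(r_j^{2n})$ and the ratio tends to $0$, in agreement with the asserted formula.
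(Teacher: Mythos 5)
Your proof is correct and follows essentially the same route as the paper's: combine the cell formula of \Lem{b} with the asymptotics of \Lem{a}, pass to the projected vectors (your decomposition argument is just an unpacking of the identity $D={}^t\!PDP$ that the paper invokes), and cancel the common factor in the ratio. The only addition is your explicit verification that $c_j=(-Dv_j,v_j)_{l(V_0)}>0$ and $a_j^{(w)}>0$, a point the paper leaves implicit when it divides by the limiting denominator.
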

We denote this limit by $\frac{d\nu_f}{d\nu}({wj^\infty})$. 
Although this precise notation may look like $\frac{d\nu_f}{d\nu}(\pi({wj^\infty}))$, it is not clear whether the relation $\pi({wj^\infty})=\pi({\tilde w\tilde j^\infty})$ implies the identity $\frac{d\nu_f}{d\nu}({wj^\infty})=\frac{d\nu_f}{d\nu}({\tilde w\tilde j^\infty})$.
\begin{proof}[Proof of \Lem{jjj}]
From \Lem{b}, \Lem{a} and the identity $D={}^t\!P DP$,
\begin{align*}
\frac{\nu_f(K_{wj^n})}{\nu(K_{wj^n})}
&=\frac{^t(PA_j^n A_w f)D(PA_j^n A_w f)}{\sum_{k=1}^N {}^t(PA_j^nA_w h_k)D(PA_j^nA_w h_k)}\\
&\xrightarrow{n\to\infty} \frac{(u_j,A_w f)_{l(V_0)}^2\,^t(Pv_j)D(Pv_j)}{\sum_{k=1}^N (u_j,A_w h_k)_{l(V_0)}^2\,^t(Pv_j)D(Pv_j)}
=\frac{(u_j,A_w f)_{l(V_0)}^2}{a_j^{(w)}}.\tag*{\qedhere}
\end{align*}
\end{proof}
We now consider the following rather restrictive condition.
\begin{itemize}
\item[(A3)] $D^2=-\gm D$ for some $\gm>0$.
In other words, all the eigenvalues of $D$ are either $0$ or $-\gm$.
\end{itemize}
In \Exam{1}, (A3) holds with $\gm=3$.

The following proposition was proved in \cite[Theorem~6.1]{BHS} for the case of the standard Dirichlet form on $SG_2$; however, that proof was different from the one presented here.
\begin{prop}
Assume the conditions \rom{(A1)}--\rom{(A3)}.
For each $w\in W_*$, there exists a set of positive numbers $\{b_j^{(w)}\}_{j\in S_0}$ such that
\begin{equation}\label{eq:bjw}
\frac{\nu_f(K_w)}{\nu(K_w)}=\sum_{j\in S_0}b_j^{(w)}\frac{d\nu_f}{d\nu}(wj^\infty),
\qquad f\in \cH.
\end{equation}
If $\# V_0\ge3$, the $\{b_j^{(w)}\}_{j\in S_0}$ are uniquely determined.
\end{prop}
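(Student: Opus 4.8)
The plan is to use condition (A3) to diagonalize $D$ completely and thereby turn \Eq{nu} into a statement about coordinates. Since $D$ is symmetric, non-positive definite, annihilates exactly the constants by (D2), and satisfies $D^2=-\gm D$, its only eigenvalues are $0$ (on the line $\R\bone$) and $-\gm$ (on $\tilde l(V_0)$); hence $D=-\gm P$. Substituting this into \Lem{b} gives $\nu_f(K_w)=(2\gm/r_w)\,|PA_wf|_{l(V_0)}^2$ for every $f\in\cH$, and likewise for $\nu=\sum_k\nu_{h_k}$. The whole problem is thereby recast in terms of the single vector $PA_wf\in\tilde l(V_0)$.

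First I would make the link with the vectors $u_j$ explicit. Let $p(j)\in V_0$ be the fixed point of $\psi_j$. By \Lem{A} and $D=-\gm P$ one has $u_j=De_{p(j)}=-\gm Pe_{p(j)}$, so that $(u_j,A_wf)_{l(V_0)}=-\gm\,(PA_wf)(p(j))$ and $(u_j,A_wf)_{l(V_0)}^2=\gm^2\,(PA_wf)(p(j))^2$. Condition (A1) makes $j\mapsto p(j)$ a bijection from $S_0$ onto $V_0$, so summing over $j\in S_0$ reproduces the full squared norm:
\[
\sum_{j\in S_0}(u_j,A_wf)_{l(V_0)}^2=\gm^2\sum_{p\in V_0}(PA_wf)(p)^2=\gm^2\,|PA_wf|_{l(V_0)}^2.
\]
Combined with the previous paragraph this yields $\nu_f(K_w)=(2/(r_w\gm))\sum_{j\in S_0}(u_j,A_wf)_{l(V_0)}^2$, and in particular $\nu(K_w)=(2/(r_w\gm))\sum_{j\in S_0}a_j^{(w)}$. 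Dividing and inserting the expression for $d\nu_f/d\nu(wj^\infty)$ from \Lem{jjj}, existence follows immediately with the explicit choice $b_j^{(w)}=a_j^{(w)}\big/\sum_{j'\in S_0}a_{j'}^{(w)}$; these are positive because each $a_j^{(w)}>0$, and they even sum to $1$.

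For uniqueness under $\#V_0\ge3$, suppose two families $\{b_j^{(w)}\}$ and $\{\tilde b_j^{(w)}\}$ both satisfy \Eq{bjw} for all $f\in\cH$. Writing $\dl_j=(b_j^{(w)}-\tilde b_j^{(w)})/a_j^{(w)}$, this gives $\sum_{j\in S_0}\dl_j(u_j,A_wf)_{l(V_0)}^2=0$ for every $f$. Here condition (A2) is essential: since $A_w$ is invertible, $PA_wf$ ranges over all of $\tilde l(V_0)$ as $f$ ranges over $\cH=l(V_0)$, so by the coordinate identity above the vanishing becomes $\sum_{p\in V_0}\dl_{j(p)}\,y(p)^2=0$ for every $y\in\tilde l(V_0)$. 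Testing with $y=e_p-e_q\in\tilde l(V_0)$ forces $\dl_{j(p)}+\dl_{j(q)}=0$ for all $p\ne q$, and with three or more vertices this linear system has only the trivial solution, so $\dl_j\equiv0$ and the $b_j^{(w)}$ are unique. For $\#V_0=2$ the same computation leaves one free parameter, which is exactly why the restriction $\#V_0\ge3$ appears.

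The genuinely substantive points, rather than any heavy computation, are the structural reduction $D=-\gm P$ and the observation that (A1) aligns the $\#S_0=\#V_0$ forms $(u_j,\cdot)_{l(V_0)}^2$ with the coordinate squares on $\tilde l(V_0)$; once these are in place existence is automatic. I expect the main obstacle to be the uniqueness half: one must invoke invertibility (A2) to guarantee that $PA_wf$ exhausts $\tilde l(V_0)$—otherwise the coefficients would be pinned down only on a subspace—and then apply the elementary but decisive fact that a diagonal quadratic form vanishing on the hyperplane $\bone^\perp$ must vanish identically precisely when $\#V_0\ge3$.
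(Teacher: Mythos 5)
Your proof is correct and takes essentially the same route as the paper: both exploit (A3) to turn $-\,{}^t(A_wf)D(A_wf)$ into the sum of squares $\gm^{-1}\sum_{j\in S_0}(u_j,A_wf)_{l(V_0)}^2$ (your explicit reduction $D=-\gm P$ is just a restatement of the paper's use of ${}^tDD=-\gm D$), arriving at the same formula $b_j^{(w)}=a_j^{(w)}/\sum_i a_i^{(w)}$, and both prove uniqueness by using the invertibility of $A_w$ to test against vectors of the form $e_p-e_q$ (the paper phrases this as choosing $x$ with $Dx=e_{p_j}-e_{p_k}$ via \Lem{D}), deducing that the pairwise sums $\beta_j+\beta_k$ vanish and hence all coefficients vanish when $\#V_0\ge3$.
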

\begin{proof}
Since $^tD D=D^2=-\gm D$,
\begin{align*}
\frac{\nu_f(K_w)}{\nu(K_w)}
&=\frac{^t(A_w f)D(A_w f)}{\sum_{k=1}^N {}^t(A_w h_k)D(A_w h_k)}\\
&=\frac{-\gm^{-1}\cdot{}^t(DA_w f)(DA_w f)}{-\gm^{-1}\sum_{k=1}^N {}^t(DA_w h_k)(DA_w h_k)}\\
&=\frac{\sum_{j\in S_0}(u_j,A_w f)_{l(V_0)}^2}{\sum_{k=1}^N\sum_{i\in S_0}(u_i,A_w h_k)_{l(V_0)}^2}\\
&=\sum_{j\in S_0}\frac{a_j^{(w)}}{\sum_{i\in S_0} a_i^{(w)}}\frac{d\nu_f}{d\nu}(wj^\infty).
\end{align*}
Therefore, \Eq{bjw} holds by letting
\begin{equation}\label{eq:bjex}
b_j^{(w)}=\frac{a_j^{(w)}}{\sum_{i\in S_0} a_i^{(w)}}.
\end{equation}

To prove the uniqueness of $\{b_j^{(w)}\}_{j\in S_0}$, it suffices to prove that
\begin{equation}\label{eq:unique}
\sum_{j\in S_0}\beta_j(u_j,A_w f)_{l(V_0)}^2=0\quad\mbox{for all $f\in \cH$ }
\end{equation}
implies $\beta_j=0$ for all $j\in S_0$.
Let $j$ and $k$ be distinct elements of $S_0$.
Denote the fixed points of $\psi_j$ and $\psi_k$ by $p_j$ and $p_k$, respectively.
From \Lem{D}, there exists an $x\in l(V_0)$ such that $(Dx)(p_j)=1$, $(Dx)(p_k)=-1$, and $(Dx)(p)=0$ for $p\in V_0\setminus\{p_j,p_k\}$; in other words, $(u_j,x)_{l(V_0)}=1$, $(u_k,x)_{l(V_0)}=-1$, and $(u_i,x)_{l(V_0)}=0$ for $i\in S_0\setminus\{j,k\}$. 
From the surjectiveness of $A_w$, there exists an $f\in \cH\simeq l(V_0)$ such that $A_w f=x$.
Then, from \Eq{unique}, $\beta_j+\beta_k=0$.
This relation implies that $\beta_j=0$ for all $j\in S_0$ because $\#S_0=\#V_0\ge3$.
\end{proof}
From \Eq{bjex}, we have the identity
\begin{equation}\label{eq:sum1}
\sum_{j\in S_0}b_j^{(w)}=1.
\end{equation}
The coefficients $\{b_j^{(w)}\}_{j\in S_0}$ provide some information on the distribution of the energy measures of harmonic functions.
In a typical example, $\{b_j^{(w)}\}_{j\in S_0}$ describes the skewness of $\nu$ on the cell $K_{wj}$ relative to $K_w$ as follows, which is due to Bell, Ho, and Strichartz~\cite{BHS}.
\begin{thm}[cf.\ {\cite[Theorem~6.3]{BHS}}]\label{th:b}
We consider the standard Dirichlet form on $SG_2$ given in \Exam{1} and  write $S=S_0=\{1,2,3\}$.  
As a choice of $\{h_i\}_{i=1}^N\subset\cH$, let $N=2$ and take a pair $h_1,h_2\in\cH$ so that $\cE(h_i,h_j)=\dl_{ij}/4$ for any $i,j\in\{1,2\}$, where $\dl_{ij}$ represents the Kronecker delta.
Accordingly, $\nu=\nu_{h_1}+\nu_{h_2}$.
Then, the identity
\[
\frac15\left(b_j^{(w)}-\frac13\right)=\frac14\left(\frac{\nu(K_{wj})}{\nu(K_w)}-\frac13\right)
\]
holds for any $w\in W_*$ and $j\in S_0$.
\end{thm}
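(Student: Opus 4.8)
The plan is to push both sides of the asserted equality back to the single family of nonnegative numbers $a_j^{(w)}=\sum_{k=1}^2(u_j,A_wh_k)_{l(V_0)}^2$, which already control $b_j^{(w)}$ through \Eq{bjex}, and then to reduce the whole statement to one identity for the operators $A_j$ and $D$. Throughout I use that for $SG_2$ one has $S_0=V_0=\{p_1,p_2,p_3\}$, $r_i=r=3/5$, and, by (A3), $\gm=3$.

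First I would re-express the measure ratio. By \Lem{b} together with (A3), for every word $u$
\[
\nu(K_u)=-\frac2{r_u}\sum_{k=1}^2{}^t(A_uh_k)D(A_uh_k)=\frac2{3r_u}\sum_{k=1}^2|D A_uh_k|_{l(V_0)}^2 .
\]
Since $(u_i,A_uh_k)_{l(V_0)}=(DA_uh_k)(p_i)$ and the points $p_i$ $(i\in S_0)$ exhaust $V_0$, the last sum equals $\sum_{i\in S_0}a_i^{(u)}$; here the hypothesis $\#V_0=3$ enters, as summing the squared components over $S_0$ is exactly the full $l(V_0)$-norm. Applying this to $u=w$ and $u=wj$ and using $r_{wj}=r\,r_w$ gives $\nu(K_{wj})/\nu(K_w)=r^{-1}\bigl(\sum_i a_i^{(wj)}\bigr)/\bigl(\sum_i a_i^{(w)}\bigr)$.

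The heart of the matter is the operator identity
\[
{}^t\!A_j D A_j=\tfrac1{25}\bigl(D-4\,u_j\,{}^t u_j\bigr),\qquad j\in S_0 .
\]
Granting it, I write $a_i^{(wj)}=\sum_k(u_i,A_jA_wh_k)_{l(V_0)}^2$ using $A_{wj}=A_jA_w$, sum over $i\in S_0$ to turn $\sum_i a_i^{(wj)}$ into $\sum_k|D A_j(A_wh_k)|_{l(V_0)}^2=-\gm\sum_k{}^t(A_wh_k)({}^t\!A_jDA_j)(A_wh_k)$, substitute the identity, and recognize $-\gm\,{}^txDx=|Dx|_{l(V_0)}^2$ and ${}^tx(u_j{}^tu_j)x=(u_j,x)_{l(V_0)}^2$. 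This yields $\sum_{i\in S_0}a_i^{(wj)}=\tfrac1{25}\bigl(\sum_{i\in S_0}a_i^{(w)}+12\,a_j^{(w)}\bigr)$. Dividing by $\sum_i a_i^{(w)}$, inserting $b_j^{(w)}=a_j^{(w)}/\sum_i a_i^{(w)}$ from \Eq{bjex} and $r=3/5$ gives $\nu(K_{wj})/\nu(K_w)=\tfrac1{15}\bigl(1+12\,b_j^{(w)}\bigr)$, and the claimed equality follows by a one-line rearrangement.

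The main obstacle is the operator identity itself; this is the only place where the specific geometry of $SG_2$ --- three vertices, the resistance $r=3/5$, and the particular harmonic extension --- is used. I would prove it by writing down the explicit matrix $A_1$ encoding the midpoint rule $m_{12}=(2x_1+2x_2+x_3)/5$ and its symmetric counterparts, verifying ${}^t\!A_1DA_1=\tfrac1{25}(D-4u_1{}^tu_1)$ by direct multiplication, and then invoking the $S_3$-symmetry of the gasket, which permutes the $p_j$, fixes $D$, and permutes the $u_j$, to obtain the cases $j=2,3$. As a consistency check one may sum over $j$: since the columns of $D$ are the $u_j$, one has $\sum_j u_j{}^tu_j=D^2=-3D$, and the identity then reproduces the harmonic-structure relation $\sum_{j\in S_0}{}^t\!A_jDA_j=rD$.
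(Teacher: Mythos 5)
Your proof is correct. I checked the one nontrivial input, the conjugation identity: with the $1/5$--$2/5$ extension rule one has
\[
A_1=\begin{pmatrix}1&0&0\\ 2/5&2/5&1/5\\ 2/5&1/5&2/5\end{pmatrix},\qquad
{}^t\!A_1DA_1=\frac1{25}\begin{pmatrix}-18&9&9\\ 9&-6&-3\\ 9&-3&-6\end{pmatrix}
=\frac1{25}\left(D-4u_1{}^tu_1\right),
\]
and the $S_3$-symmetry argument for $j=2,3$ is legitimate. The surrounding reductions are also sound: $\nu(K_u)=\frac{2}{3r_u}\sum_{i\in S_0}a_i^{(u)}$ follows from \Lem{b} and (A3) exactly as you say (the point being that $(u_i,x)_{l(V_0)}=(Dx)(p_i)$ and the $p_i$, $i\in S_0$, exhaust $V_0$), the recursion $\sum_{i\in S_0}a_i^{(wj)}=\frac1{25}\bigl(\sum_{i\in S_0}a_i^{(w)}+12a_j^{(w)}\bigr)$ follows by substituting the identity, and with $r=3/5$ and \Eq{bjex} this gives $\nu(K_{wj})/\nu(K_w)=\frac1{15}\bigl(1+12b_j^{(w)}\bigr)$, which rearranges to the stated equality. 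Your consistency check $\sum_{j\in S_0}{}^t\!A_jDA_j=rD$ is also right, since $\sum_j u_j{}^tu_j=D^2=-3D$.

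Be aware, though, that there is no internal proof to compare against: the paper states \Thm{b} with attribution to Bell--Ho--Strichartz (Theorem~6.3 of their paper) and does not reprove it, so your argument is a genuinely independent, self-contained derivation rather than a variant of anything in this text. It is very much in the spirit of the present paper's methodology (primitive linear operators, \Lem{b}, condition (A3)) as opposed to the analysis of harmonic functions carried out in the cited source, and it isolates precisely where the specific geometry of $SG_2$ enters, namely in the single matrix identity ${}^t\!A_jDA_j=\frac1{25}(D-4u_j{}^tu_j)$. One small observation your proof makes evident: the normalization $\cE(h_i,h_j)=\dl_{ij}/4$ is never actually used in your computation, so the identity holds for any pair $h_1,h_2$ spanning $\cH$ modulo constants (the normalization only serves to make $\nu$ a canonical probability measure); this is a mild strengthening of the statement as quoted.
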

It is easy to see that the measure $\nu$ given above is a probability measure on $K$ and is independent of the choice of $h_1$ and $h_2$.
For $SG_l$ with $l\ge3$, such a clear interpretation of $\{b_j^{(w)}\}_{j\in S_0}$ as in \Thm{b} seems difficult to obtain.

Let $\lm$ be a Borel probability measure on $\Sg=S^\N$, defined as the infinite product of the uniform probability measure $(\#S)^{-1}\sum_{j\in S}\dl_j$ on $S$. 
For $\om=\om_1\om_2\cdots\in \Sg$ and $m\in\N$, $[\om]_m$ denotes $\om_1\om_2\cdots\om_m\in W_m$.
For $m\in \Z_+$, let $\xi_m$ denote the image measure of $\lm$ by the map $\Sg\ni \om\mapsto \{b_j^{([\om]_m)}\}_{j\in S}\in \R^{\#S}$.
 Bell, Ho, and Strichartz~\cite{BHS} discuss some properties of $\xi_m$ for the canonical Dirichlet form on $SG_2$ and posed conjectures, which we call \Conj{BHS2} below.

 Until the end of this section, we consider the Dirichlet form for two-dimensional standard Sierpinski gasket $SG_2=:K$ given in \Exam{1} and take the measure $\nu$ as in \Thm{b}.
 We write $S=S_0=\{1,2,3\}$ and 
 \begin{align*}
 \D:={}&\Biggl\{(b_1,b_2,b_3)\in\R^3\;\vrule\; \sum_{j=1}^3 b_j=1\text{ and }\sum_{j=1}^3\left(b_j-\frac13\right)^2<\frac16\Biggr\}\\
 \Biggl(={}&\Biggl\{(b_1,b_2,b_3)\in\R^3\;\vrule\; \sum_{j=1}^3 b_j=1\text{ and }\sum_{j=1}^3 b_j^2<\frac12\Biggr\}\Biggl).
 \end{align*}
 \begin{thm}[cf.~{\cite[Theorem~6.5]{BHS}}]\label{th:BHS1}
 For all $w\in W_*$, $(b_1^{(w)},b_2^{(w)},b_3^{(w)})$ belongs to $\D$; that is,
 \begin{equation}\label{eq:less}
 \sum_{j=1}^3\left(b_j^{(w)}-\frac13\right)^2<\frac16.
 \end{equation}
 This inequality is sharp.
 In particular, $\xi_m$ concentrates on $\D$ for all $m$.
 \end{thm}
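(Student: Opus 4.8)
The plan is to reduce \Eq{less} to an elementary geometric statement about three vectors in $\R^2$ and then establish strictness from the invertibility of $A_w$. First, since $\sum_{j=1}^3 b_j^{(w)}=1$ by \Eq{sum1}, the inequality \Eq{less} is equivalent to $\sum_{j=1}^3(b_j^{(w)})^2<1/2$, as already recorded in the definition of $\D$. Substituting the explicit formula \Eq{bjex} and writing $a_j=a_j^{(w)}$, this becomes
\[
\sum_{j=1}^3 a_j^2<\frac12\Bigl(\sum_{j=1}^3 a_j\Bigr)^2 .
\]
No structure beyond $\sum_j b_j^{(w)}=1$ and $b_j^{(w)}=a_j/\sum_i a_i$ is used here.

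Next I exploit that $N=2$ in \Thm{b}. For fixed $w$, set
\[
\bfa_j=\bigl((u_j,A_w h_1)_{l(V_0)},\,(u_j,A_w h_2)_{l(V_0)}\bigr)\in\R^2,\qquad j=1,2,3,
\]
so that $a_j=|\bfa_j|^2$. The key identity is $\bfa_1+\bfa_2+\bfa_3=0$: indeed $u_1+u_2+u_3$ is the sum of the columns of $D$, which equals $D\bone=0$ by \rom{(D2)}. Setting $c_j=|\bfa_j|=\sqrt{a_j}$, the inequality from the first step reads $\sum_j c_j^4<\frac12(\sum_j c_j^2)^2$, which by Heron's formula states precisely that the triangle with side lengths $c_1,c_2,c_3$ has strictly positive area. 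Because $\bfa_1+\bfa_2+\bfa_3=0$ forces the weak triangle inequalities $c_i\le c_j+c_k$ to hold automatically, with equality in at least one of them precisely when $\bfa_1,\bfa_2,\bfa_3$ are collinear, the desired strict inequality is equivalent to the \emph{non-collinearity} of $\bfa_1,\bfa_2,\bfa_3$.

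The heart of the argument is therefore to rule out collinearity. The vectors $\bfa_1,\bfa_2,\bfa_3$ lie on a common line through the origin iff some nonzero $(\alpha,\beta)$ annihilates all of them, i.e.\ $(u_j,A_w f)_{l(V_0)}=0$ for $j=1,2,3$, where $f=\alpha h_1+\beta h_2\in\cH$ is nonzero because $h_1,h_2$ are linearly independent. Since $u_j$ is the $j$-th column of the symmetric matrix $D$, one has $(u_j,A_w f)_{l(V_0)}=(DA_w f)_j$, so the condition says $DA_w f=0$, that is $A_w f\in\ker D=\R\bone$ by \rom{(D2)}. Using $A_w\bone=\bone$ together with the invertibility of $A_w$ from \rom{(A2)}, this forces $f\in\R\bone$; but $h_1,h_2,\bone$ form a basis of $\cH\simeq l(V_0)$, so $\bone\notin\mathrm{span}\{h_1,h_2\}$ and hence $f=0$, a contradiction. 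Thus the $\bfa_j$ are non-collinear and \Eq{less} holds strictly. I expect this step to be the main obstacle; the earlier reduction is purely formal, whereas strictness genuinely needs \rom{(A2)} and the relation $\#V_0=N+1=3$, which is exactly where the presence of three vertices enters.

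Finally, for sharpness I take $w=j^n$ and let $n\to\infty$. By \Lem{a}, $(u_i,A_j^n h_k)_{l(V_0)}\sim r_j^{\,n}(u_j,h_k)_{l(V_0)}(u_i,v_j)_{l(V_0)}$, so each $\bfa_i$ is asymptotically a scalar multiple of the single fixed vector $\bigl((u_j,h_1)_{l(V_0)},(u_j,h_2)_{l(V_0)}\bigr)$; the triangle degenerates, $\sum_i(b_i^{(j^n)})^2\to1/2$, and a short computation (using $(u_j,v_j)_{l(V_0)}=1$ and the symmetry of $v_j$) identifies the limit as $(2/3,1/6,1/6)\in\partial\D$, whence $\sum_i(b_i^{(j^n)}-1/3)^2\to1/6$. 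This proves sharpness. The last assertion is then immediate: every $\{b_j^{(w)}\}_{j}$ lies in $\D$, so the image measure $\xi_m$ assigns full mass to $\D$ for every $m$.
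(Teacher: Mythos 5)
Your proof is correct, and on the core inequality it is, in substance, the paper's own argument in geometric dress. The paper proves \Thm{BHS1} via the more general \Thm{bj3}: your vectors $\bfa_j=\bigl((u_j,A_wh_1)_{l(V_0)},(u_j,A_wh_2)_{l(V_0)}\bigr)$ are exactly the coordinate representations of the paper's $z_j={}^t\!A_wu_j$ under the Gram map of the inner product $\la x,y\ra=\sum_{k}(x,h_k)_{l(V_0)}(y,h_k)_{l(V_0)}$ introduced in Section~3; your Heron identity $\bigl(\sum_jc_j^2\bigr)^2-2\sum_jc_j^4=16\,(\mathrm{area})^2=4\bigl(|\bfa_1|^2|\bfa_2|^2-(\bfa_1\cdot\bfa_2)^2\bigr)$ is precisely \Lem{bj} specialized to $\#S_0=3$, i.e.\ \Eq{bj3} (both rest on $\sum_j z_j=0$, the paper's \Eq{zz}); and your non-collinearity step is the dual formulation of the paper's strict Cauchy--Schwarz step, both ultimately using (D2) and (A2). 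One structural difference: your $\R^2$ picture is tied to $N=2$, whereas the paper's inner-product formulation handles arbitrary $N$ and so gives \Thm{bj3} under (A1)--(A4); for the $SG_2$ statement at hand this costs nothing. Where you genuinely diverge is sharpness. The paper does not prove sharpness directly---it defers it to \Thm{main}, whose proof uses products of random matrices (Furstenberg's theorem in Case (I), or the index of the Dirichlet form in Case (II)). You instead exhibit the explicit words $w=j^n$ and use only \Lem{a} to get $b_i^{(j^n)}\to(u_i,v_j)_{l(V_0)}^2\big/\sum_{l}(u_l,v_j)_{l(V_0)}^2$, which for $SG_2$ equals $2/3$ for $i=j$ and $1/6$ otherwise, a point of the boundary of $\D$; this is an elementary, self-contained route that moreover identifies concrete boundary limit points, something the paper's argument does not provide. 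One small point of rigor there: the asymptotic $(u_i,A_j^nh_k)_{l(V_0)}\sim r_j^n(u_j,h_k)_{l(V_0)}(u_i,v_j)_{l(V_0)}$ is vacuous when the right-hand side vanishes; the clean statement from \Lem{a} is $r_j^{-2n}a_i^{(j^n)}\to(u_i,v_j)_{l(V_0)}^2\sum_k(u_j,h_k)_{l(V_0)}^2$, and since $(u_j,v_j)_{l(V_0)}=1$ the rescaled total $\sum_l r_j^{-2n}a_l^{(j^n)}$ stays bounded away from $0$, so the limit of $b_i^{(j^n)}$ is as you claim (for $SG_2$ none of the $(u_i,v_j)_{l(V_0)}$ vanish, so this is cosmetic).
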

We note that \Eq{less} can be rewritten as
\[
 \sum_{j=1}^3 (b_j^{(w)})^2<\frac12
\]
because of \Eq{sum1}.

\begin{conj}[cf.~{\cite[Conjectures~7.1 and 7.2]{BHS}}]\label{conj:BHS2}
Let $(r,\theta)$ be polar coordinates for the disk $\D$ with center $\bfc=(1/3,1/3,1/3)$.
More specifically, 
\begin{align*}
r(z)&=|z-\bfc|_{\R^3},\\
\theta(z)&=\mathop{\rm Arg}\left((z-\bfc,\bfa_1)_{\R^3}+\sqrt{-1}(z-\bfc,\bfa_2)_{\R^3}\right)\in(-\pi,\pi]
\end{align*}
with $\bfa_1=(1/\sqrt2,-1/\sqrt2,0)$, $\bfa_2=(-1/\sqrt6,-1/\sqrt6,2/\sqrt6)$, where $(\cdot,\cdot)_{\R^3}$ and $|\cdot|_{\R^3}$ denote the standard inner product and norm on $\R^3$, respectively.
For $m\in \N$, let $P_m$ and $Q_m$ denote the image measures of $\xi_m$ by the mappings $\theta(\cdot)$ and $r(\cdot)$, respectively.
Then:
\begin{enumerate}
\item $P_m$ converges weakly to an absolutely continuous measure on $(-\pi,\pi]$ as $m\to\infty$;
\item $Q_m$ converges weakly to the delta measure at $1/\sqrt6$ as $m\to\infty$.
\end{enumerate}
\end{conj}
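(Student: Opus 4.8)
The plan is to reduce the whole of \Conj{BHS2} to a statement about products of i.i.d.\ random matrices acting on the two-dimensional space $\tilde l(V_0)$. First I would make the coefficients $b_j^{(w)}$ fully explicit for the standard gasket. Specializing \Eq{bjex} and using (A3) with $\gm=3$, one has $-D=3P$, so the normalization $\cE(h_i,h_k)=\dl_{ik}/4$ of \Thm{b} forces $\{P(h_1|_{V_0}),P(h_2|_{V_0})\}$ to be an orthogonal basis of $\tilde l(V_0)$ of squared length $1/12$. Since $^t\!A_w u_j\in\tilde l(V_0)$ by \Lem{easy}, Parseval's identity gives $a_j^{(w)}=\frac1{12}|{}^t\!A_w u_j|_{l(V_0)}^2$, whence
\[
b_j^{(w)}=\frac{|{}^t\!A_w u_j|^2}{\sum_{i\in S}|{}^t\!A_w u_i|^2},\qquad j\in S.
\]
Here $u_1,u_2,u_3$ are the columns of $D$; they lie in $\tilde l(V_0)\cong\R^2$, have equal length, and are mutually at $120^\circ$ because $u_1+u_2+u_3=0$.

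Next I would set $M_m:={}^t\!A_{[\om]_m}|_{\tilde l(V_0)}$. Since ${}^t\!A_w$ preserves $\tilde l(V_0)$ and ${}^t\!A_{[\om]_m}={}^t\!A_{\om_1}\cdots{}^t\!A_{\om_m}$, the family $(M_m)$ is a product of the i.i.d.\ $2\times2$ matrices $N_i:={}^t\!A_i|_{\tilde l(V_0)}$ sampled by $\lm$. I would then check the two hypotheses of Furstenberg's theorem for $\{N_i\}$. Non-compactness (a singular-value gap) follows from \Lem{A}: each $N_j$ carries the simple eigenvalue $r_j$ together with an eigenvalue of strictly smaller modulus, and (A2) guarantees invertibility; distinct moduli force both eigenvalues real. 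Strong irreducibility is where the three-vertex structure enters: the group $S_3$ permuting $V_0$ acts on $\tilde l(V_0)$ by its two-dimensional representation, in which a $3$-cycle is a rotation by $120^\circ$; a common invariant finite family of lines for all $N_i$ would have to consist of shared eigenlines, yet no line is fixed by a $120^\circ$ rotation, so none exists. With these hypotheses, the results on random matrices (cf.~\cite{BL,Ku89,Hi08,Hi10,Hi13}) supply distinct Lyapunov exponents $\gm_1>\gm_2$ and an atomless stationary measure describing the most-expanded direction of $M_m$.

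For part (ii) I would use only the gap $\gm_1>\gm_2$, which yields $\sigma_2(M_m)/\sigma_1(M_m)\to0$ exponentially $\lm$-a.s. Writing the singular value decomposition $M_m=U\,\mathrm{diag}(\sigma_1,\sigma_2)\,{}^tV$ with top right singular direction $V_1$, one has $|M_m u_j|^2=\sigma_1^2(V_1,u_j)^2+\sigma_2^2(V_2,u_j)^2$, so $b_j^{(w)}\to (V_1,u_j)^2/\sum_i(V_1,u_i)^2$ whenever $(V_1,u_j)\neq0$. Because the $u_j$ are at $120^\circ$ with equal length, a direct computation makes $\sum_i(V_1,u_i)^2$ and $\sum_i(V_1,u_i)^4$ independent of $V_1$, giving $\sum_j (b_j^{(w)})^2\to\frac12$, i.e.\ $r\to1/\sqrt6$, for \emph{every} limiting direction. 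The exceptional event $\{(V_1,u_j)=0\}$ is excluded in the limit by atomlessness of the stationary measure, which upgrades this to $r([\om]_m)\to1/\sqrt6$ in $\lm$-probability; that is exactly $Q_m\to\dl_{1/\sqrt6}$.

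Finally, part (i). The same computation shows that in the limit $(b_1,b_2,b_3)$ lies on the circle $r=1/\sqrt6$ and its angular coordinate $\theta$ is a fixed affine function (modulo $2\pi$) of the angle of $V_1$; hence $P_m$ converges weakly to the image, under this affine map, of the stationary measure governing $V_1(M_m)$, giving weak convergence to \emph{some} limit. The genuinely hard point, and the main obstacle, is to prove that this limit is \emph{absolutely continuous}: absolute continuity of Furstenberg measures does not follow from irreducibility and contraction, and I would expect it to require either extra information specific to the gasket matrices or a separate dimension/regularity argument. It is the one part I would not expect to settle by the elementary operator methods that suffice for (ii).
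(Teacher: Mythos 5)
You should first note what ``the paper's proof'' of this statement actually is: the statement is a conjecture, and the paper settles only item (2) of it (as Case (I) of \Thm{main}), while the concluding remarks state explicitly that there are no clues concerning item (1). Your proposal has exactly the same scope, and for item (2) it is correct and follows essentially the same route as the paper: reduce everything to products of i.i.d.\ invertible $2\times2$ matrices acting on $\tilde l(V_0)$, get non-compactness from \Lem{A}(1) and (A2), verify strong irreducibility, and invoke Furstenberg's theorem. Indeed your key quantity $\sigma_2(M_m)/\sigma_1(M_m)$ is precisely the square root of the quantity $(\det \tilde A_{[\om]_n})^2/\|\tilde A_{[\om]_n}\|_\op^4$ that the paper drives to zero in \Eq{A}; the paper deduces this from the angular-distance form of Furstenberg's theorem, you from the Lyapunov-exponent gap, which is the same theorem in \cite{BL}. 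The differences are implementational and each buys something: you specialize to $SG_2$, where the normalization of \Thm{b} and Parseval give the clean formula $b_j^{(w)}=|{}^t\!A_w u_j|_{l(V_0)}^2/\sum_{i}|{}^t\!A_w u_i|_{l(V_0)}^2$ and a trigonometric identity showing the limit $1/2$ is independent of the limiting direction, whereas the paper works under (A1)--(A4) via \Lem{bj} and \Eq{bj3}, which simultaneously yields the sharp bound of \Thm{bj3} and a second, measure-theoretic Case (II) (via the index of Dirichlet forms) that your approach does not cover (and the conjecture does not need). Your strong-irreducibility argument via the $S_3$-symmetry is different from the paper's explicit-eigenvector verification in its Example, but both are sound. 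Two small remarks: your detour through atomlessness to exclude $\{(V_1,u_j)=0\}$ is unnecessary, since $\sum_i (V_1,u_i)_{l(V_0)}^2=\tfrac32|u_1|_{l(V_0)}^2$ is a positive constant independent of $V_1$, so $\sum_j (b_j^{([\om]_m)})^2\to 1/2$ holds $\lm$-a.s.\ (not merely in probability), exactly as in \Eq{main}; and your reduction of item (1) to absolute continuity of the image of the Furstenberg stationary measure is a sensible refinement going slightly beyond the paper's ``no clues'' remark, but---as you correctly acknowledge---it does not settle item (1), which remains open in the paper as well.
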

Bell, Ho, and Strichartz~\cite{BHS} also conjectured the invariance of the limit of $P_m$ under some rational maps, but we skip the details because we do not discuss such kind of property in this paper.

In the next section we prove \Thm{BHS1} and confirm \Conj{BHS2}(2) in a slightly more general situation.

\section{Main results}
We keep the notation used in the previous section and always assume conditions (A1)--(A3).

Fix $w\in W_*$.
For $j\in S_0$, let $z_j={}^t\!A_w u_j$.
Note that $z_j\in \tilde l(V_0)$ and $z_j\ne0$ from \Lem{easy}.
Also, since $\sum_{j\in S_0}u_j=0$ from $D\bone=0$, we have
\begin{equation}\label{eq:zz}
\sum_{j\in S_0}z_j=0.
\end{equation}
For $x,y\in \tilde l(V_0)\subset l(V_0)$, we define
\[
\la x,y\ra=\sum_{k=1}^N (x,h_k)_{l(V_0)}(y,h_k)_{l(V_0)}\quad\mbox{and}\quad\|x\|=\la x,x\ra^{1/2}.
\]
Then, $\la\cdot,\cdot\ra$ is an inner product on $\tilde l(V_0)$ and the identity $\|z_j\|^2=a_j^{(w)}$ holds.
We remark that there exists a positive definite symmetric operator $H$ on $\tilde l(V_0)$ such that $\la x,y\ra=(Hx,Hy)_{l(V_0)}$ for all $x,y\in \tilde l(V_0)$.

Fix an arbitrary $k\in S_0$ and let $S_0'=S_0\setminus\{k\}$. Then we have the following lemma.
\begin{lem}\label{lem:bj}
The following identity holds:
\begin{equation}\label{eq:bj}
\sum_{j\in S_0}(b_j^{(w)})^2
=\left(2+\frac{2\sum_{i,j\in S_0',\,i\ne j}\|z_i\|^2\|z_j\|^2-\left(\sum_{i,j\in S_0',\,i\ne j}\la z_i,z_j\ra\right)^2}{\sum_{j\in S_0}\|z_j\|^4}\right)^{-1}.
\end{equation}
\end{lem}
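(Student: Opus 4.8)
The plan is to treat \Eq{bj} as a purely algebraic identity among the vectors $\{z_j\}_{j\in S_0}$, driven by the linear constraint \Eq{zz}, which is used to eliminate the distinguished index $k$. First I would record, by combining \Eq{bjex} with the identity $\|z_j\|^2=a_j^{(w)}$, that
\[
b_j^{(w)}=\frac{\|z_j\|^2}{\sum_{i\in S_0}\|z_i\|^2},
\qquad\text{so that}\qquad
\sum_{j\in S_0}(b_j^{(w)})^2=\frac{\sum_{j\in S_0}\|z_j\|^4}{\bigl(\sum_{i\in S_0}\|z_i\|^2\bigr)^2}.
\]
Taking reciprocals, the assertion \Eq{bj} is then equivalent to the single scalar identity
\[
\Bigl(\sum_{i\in S_0}\|z_i\|^2\Bigr)^2
=2\sum_{j\in S_0}\|z_j\|^4
+2\!\!\sum_{i,j\in S_0',\,i\ne j}\!\!\|z_i\|^2\|z_j\|^2
-\Bigl(\sum_{i,j\in S_0',\,i\ne j}\la z_i,z_j\ra\Bigr)^2 .
\]

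Second, I would exploit \Eq{zz} in the form $z_k=-\sum_{j\in S_0'}z_j$. Expanding the squared norm of this relation yields
\[
\|z_k\|^2=\sum_{j\in S_0'}\|z_j\|^2+\sum_{i,j\in S_0',\,i\ne j}\la z_i,z_j\ra .
\]
Writing $A=\sum_{j\in S_0'}\|z_j\|^2$ and $B=\sum_{i,j\in S_0',\,i\ne j}\la z_i,z_j\ra$, this reads $\|z_k\|^2=A+B$, so that $\sum_{i\in S_0}\|z_i\|^2=2A+B$ and $\sum_{j\in S_0}\|z_j\|^4=\sum_{j\in S_0'}\|z_j\|^4+(A+B)^2$. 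Substituting these into the scalar identity above, the terms $(A+B)^2$ and $B^2$ cancel on both sides.

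Third, after this cancellation I expect the whole statement to collapse onto the elementary square-of-a-sum expansion
\[
A^2=\Bigl(\sum_{j\in S_0'}\|z_j\|^2\Bigr)^2
=\sum_{j\in S_0'}\|z_j\|^4+\sum_{i,j\in S_0',\,i\ne j}\|z_i\|^2\|z_j\|^2 ,
\]
which holds trivially; verifying that the reduction lands exactly here completes the proof. Since the argument is a direct computation, there is no deep obstacle: the only thing requiring care is the bookkeeping of the cross terms when passing between sums indexed by $S_0$ and those indexed by $S_0'$. The single load-bearing input is the constraint \Eq{zz}, which is precisely what lets the distinguished index $k$ be removed and makes the two sides agree; the fact that the left-hand side of \Eq{bj} is manifestly independent of the choice of $k$ while the right-hand side is not is then automatically consistent.
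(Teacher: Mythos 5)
Your proposal is correct and follows essentially the same route as the paper: both start from \Eq{bjex} together with $\|z_j\|^2=a_j^{(w)}$, rewrite the sum of squares in reciprocal form, and then use the constraint \Eq{zz} (i.e., $z_k=-\sum_{j\in S_0'}z_j$) to verify the resulting polynomial identity by direct expansion. Your $A$, $B$ bookkeeping that reduces everything to $A^2=\sum_{j\in S_0'}\|z_j\|^4+\sum_{i,j\in S_0',\,i\ne j}\|z_i\|^2\|z_j\|^2$ is just a slightly tidier organization of the paper's computations \Eq{bj1} and \Eq{bj2}.
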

\begin{proof}
From \Eq{bjex},
\begin{align}\label{eq:sumb2}
\sum_{j\in S_0}(b_j^{(w)})^2&=\frac{\sum_{j\in S_0}(a_j^{(w)})^2}{\left(\sum_{j\in S_0}a_j^{(w)}\right)^2}
=\frac{\sum_{j\in S_0}\|z_j\|^4}{\left(\sum_{j\in S_0}\|z_j\|^2\right)^2}\nonumber\\
&=\left(2+\frac{\left(\sum_{j\in S_0}\|z_j\|^2\right)^2-2\sum_{j\in S_0}\|z_j\|^4}{\sum_{j\in S_0}\|z_j\|^4}\right)^{-1}.
\end{align}
Using the identity $z_k=-\sum_{j\in S_0'}z_j$ from \Eq{zz}, we have
\begin{align}
\sum_{j\in S_0}\|z_j\|^4
&=\sum_{j\in S_0'}\|z_j\|^4+\left\|\sum_{j\in S_0'}z_j\right\|^4\nonumber\\
&=\sum_{j\in S_0'}\|z_j\|^4+\left(\sum_{j\in S_0'} \|z_j\|^2+\sum_{i,j\in S_0',\,i\ne j}\la z_i,z_j\ra\right)^2\nonumber\\
&=2\sum_{j\in S_0'}\|z_j\|^4+\sum_{i,j\in S_0',\,i\ne j}\|z_i\|^2\|z_j\|^2+2\sum_{j\in S_0'} \|z_j\|^2\sum_{i,j\in S_0',\,i\ne j}\la z_i,z_j\ra\nonumber\\
&\quad+\left(\sum_{i,j\in S_0',\,i\ne j}\la z_i,z_j\ra\right)^2
\label{eq:bj1}
\end{align}
and
\begin{align}
&\left(\sum_{j\in S_0}\|z_j\|^2\right)^2-\sum_{j\in S_0}\|z_j\|^4\nonumber\\
&=\sum_{i,j\in S_0,\,i\ne j}\|z_i\|^2\|z_j\|^2\nonumber\\
&=\sum_{i,j\in S_0',\,i\ne j}\|z_i\|^2\|z_j\|^2+2\left\|\sum_{j\in S_0'} z_j\right\|^2\sum_{j\in S_0'} \|z_j\|^2\nonumber\\
&=\sum_{i,j\in S_0',\,i\ne j}\|z_i\|^2\|z_j\|^2
+2\left(\sum_{j\in S_0'} \|z_j\|^2\right)^2+2\sum_{i,j\in S_0',\,i\ne j}\la z_i,z_j\ra\sum_{j\in S_0'}\|z_j\|^2\nonumber\\
&=3\sum_{i,j\in S_0',\,i\ne j}\|z_i\|^2\|z_j\|^2+2\sum_{j\in S_0'}\|z_j\|^4
+2\sum_{i,j\in S_0',\,i\ne j}\la z_i,z_j\ra\sum_{j\in S_0'}\|z_j\|^2.
\label{eq:bj2}
\end{align}
Then,
\begin{equation*}
\left(\sum_{j\in S_0}\|z_j\|^2\right)^2-2\sum_{j\in S_0}\|z_j\|^4=2\sum_{i,j\in S_0',\,i\ne j}\|z_i\|^2\|z_j\|^2-\left(\sum_{i,j\in S_0',\,i\ne j}\la z_i,z_j\ra\right)^2
\end{equation*}
by combining \Eq{bj1} and \Eq{bj2}.
This identity and \Eq{sumb2} imply \Eq{bj}.
\end{proof}
Lastly we consider the following condition.
\begin{itemize}
\item[(A4)]$\#V_0=3$.
\end{itemize}
The following extends \Thm{BHS1} (\cite[Theorem~6.5]{BHS}) to more general situations, and the proof is more straightforward.
\begin{thm}\label{th:bj3}
Under the conditions {\rm(A1)--(A4)}, 
\begin{equation}\label{eq:12}
\sum_{j\in S_0}(b_j^{(w)})^2<\frac12
\end{equation}
for all $w\in W_*$.
This inequality is sharp.
\end{thm}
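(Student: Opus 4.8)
The plan is to use condition (A4) to collapse the double sums over $S_0'$ in \Lem{bj}, after which both the bound and its sharpness can be read off from a single Gram determinant. First I fix an arbitrary $k\in S_0$ and write $S_0'=\{i_0,j_0\}$, which has exactly two elements by (A4) together with the bijection $S_0\leftrightarrow V_0$ built into (A1). The double sums in \Eq{bj} then reduce to single products, $\sum_{i,j\in S_0',\,i\ne j}\|z_i\|^2\|z_j\|^2=2\|z_{i_0}\|^2\|z_{j_0}\|^2$ and $\sum_{i,j\in S_0',\,i\ne j}\la z_i,z_j\ra=2\la z_{i_0},z_{j_0}\ra$, so that
\[
\sum_{j\in S_0}(b_j^{(w)})^2=\left(2+\frac{4\bigl(\|z_{i_0}\|^2\|z_{j_0}\|^2-\la z_{i_0},z_{j_0}\ra^2\bigr)}{\sum_{j\in S_0}\|z_j\|^4}\right)^{-1}.
\]
The numerator of the fraction is four times the Gram determinant of $z_{i_0}$ and $z_{j_0}$ in the inner product $\la\cdot,\cdot\ra$, hence non-negative by Cauchy--Schwarz, while the denominator is strictly positive since every $z_j\ne0$ by \Lem{easy}. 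This immediately yields $\sum_{j\in S_0}(b_j^{(w)})^2\le 1/2$.

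To upgrade this to the strict inequality \Eq{12}, I will show that $z_{i_0}$ and $z_{j_0}$ are linearly independent, which makes Cauchy--Schwarz strict. Since $z_j={}^t\!A_w u_j$ and $A_w$ (hence ${}^t\!A_w$) is invertible by (A2), this is equivalent to the linear independence of $u_{i_0}$ and $u_{j_0}$. Now the three vectors $\{u_j\}_{j\in S_0}$ are precisely the columns of $D$; they are nonzero by \Lem{A}(2), they span the range $\tilde l(V_0)$ of $D$, which is two-dimensional by \Lem{D}, and they satisfy $\sum_{j\in S_0}u_j=D\bone=0$. If $u_{i_0}$ and $u_{j_0}$ were dependent, then $u_k=-u_{i_0}-u_{j_0}$ would force all three $u_j$ to be multiples of a single nonzero vector, contradicting $\dim\tilde l(V_0)=2$. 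Hence any two of the $u_j$ are independent, the Gram determinant is strictly positive, and \Eq{12} follows.

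For sharpness I will take $w=k^n$ and analyze the asymptotics of $z_j=({}^t\!A_k)^n u_j$ as $n\to\infty$. The operator ${}^t\!A_k$ preserves $\tilde l(V_0)$ because $A_k\bone=\bone$, and on this two-dimensional space it has eigenvalue $r_k$ with eigenvector $u_k$ (by \Lem{A}(2)) together with a second eigenvalue $\lambda$ of modulus strictly less than $r_k$; the eigenvalue $1$ of ${}^t\!A_k$ does not appear here, since its eigenvector pairs non-trivially with the eigenvalue-$1$ eigenvector $\bone$ of $A_k$ and therefore cannot lie in $\tilde l(V_0)=\bone^\perp$. Expanding each $u_j$ in this eigenbasis, its component along $u_k$ scales like $r_k^n$ and its transverse component like $\lambda^n$; in particular $z_k=r_k^n u_k$ exactly. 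A short computation then gives that the Gram determinant of $z_{i_0},z_{j_0}$ is $O(r_k^{2n}|\lambda|^{2n})$, whereas $\sum_{j\in S_0}\|z_j\|^4\ge\|z_k\|^4$ is of order $r_k^{4n}$, so the fraction in the displayed formula is $O\bigl((|\lambda|/r_k)^{2n}\bigr)\to 0$. Hence $\sum_{j\in S_0}(b_j^{(k^n)})^2\to 1/2$, showing that the constant $1/2$ cannot be improved.

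The main obstacle I anticipate lies in the spectral bookkeeping for the transpose in the sharpness step: one must verify that the restriction of ${}^t\!A_k$ to $\tilde l(V_0)$ really has its non-$r_k$ eigenvalue strictly inside the disk of radius $r_k$, equivalently that the modulus-one eigendirection of ${}^t\!A_k$ is transverse to $\tilde l(V_0)$. Once this transversality is secured via the pairing with $\bone$ and \Lem{A}(1), everything else is a routine reduction through \Lem{bj}, \Lem{D}, and \Lem{A}.
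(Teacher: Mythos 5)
Your proof of the strict inequality \Eq{12} follows the paper's own route: reduce \Eq{bj} to the two-element identity \Eq{bj3}, apply Cauchy--Schwarz, and get strictness from the linear independence of $u_{i_0},u_{j_0}$ (your argument via $\sum_{j\in S_0}u_j=D\bone=0$ and $\dim\tilde l(V_0)=2$ is exactly what the paper's terse appeal to (D2) amounts to), together with invertibility of ${}^t\!A_w$ from (A2). Where you genuinely differ is sharpness. The paper proves no sharpness inside this theorem at all; it defers to \Thm{main}, whose proof needs Furstenberg's theorem on products of random matrices (Case (I)) or the index theory of Dirichlet forms (Case (II)). You instead take the deterministic words $w=k^n$ and show $\sum_{j\in S_0}(b_j^{(k^n)})^2\to 1/2$ by pure spectral asymptotics: the Gram determinant of $z_{i_0},z_{j_0}$ is $O\bigl((r_k|\lambda|)^{2n}\bigr)$ while $\sum_{j\in S_0}\|z_j\|^4\ge r_k^{4n}\|u_k\|^4$, where $\lambda$ is the second eigenvalue of ${}^t\!A_k$ on $\tilde l(V_0)$. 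This is more elementary and self-contained; what it does not yield is the almost-sure convergence \Eq{main}, which is the paper's real objective and its reason for handling sharpness elsewhere.

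One step of your sharpness argument is under-justified. You need $|\lambda|<r_k$, and you derive it from the claim that the eigenvalue-$1$ eigenvector of ${}^t\!A_k$ pairs non-trivially with $\bone$, citing \Lem{A}(1). That pairing fact holds when $1$ is a \emph{simple} eigenvalue of $A_k$; but \Lem{A}(1) asserts simplicity only of $r_k$, and as stated it allows $1$ to occur with multiplicity two, in which case ${}^t\!A_k|_{\tilde l(V_0)}$ could have eigenvalue $1$ and your estimate would collapse. The gap is real as written, but it is closed by the paper's own \Lem{a}: if ${}^t\!A_k v=v$ with $v\in\tilde l(V_0)$, then for every $x\in l(V_0)$ we have $(v,x)_{l(V_0)}=(v,A_k^nx)_{l(V_0)}=(v,PA_k^nx)_{l(V_0)}$, because $(v,\bone)_{l(V_0)}=0$ kills the component of $A_k^nx$ along $\bone$; since \Lem{a} gives $PA_k^nx=O(r_k^n)\to0$, we conclude $(v,x)_{l(V_0)}=0$ for all $x$, i.e.\ $v=0$. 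Hence the restriction ${}^t\!A_k|_{\tilde l(V_0)}$ has no eigenvalue $1$; its spectrum lies in that of ${}^t\!A_k$ and contains the simple eigenvalue $r_k$, so its remaining eigenvalue has modulus less than $r_k$ by \Lem{A}(1). With this patch your proof is complete and correct.
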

\begin{proof}
Let $S'_0=\{1,2\}$. Then, \Eq{bj} can be rewritten as
\begin{equation}\label{eq:bj3}
\sum_{j\in S_0}(b_j^{(w)})^2
=\left(2+4\cdot\frac{\|z_{1}\|^2\|z_{2}\|^2-\la z_{1},z_{2}\ra^2}{\sum_{j\in S_0}\|z_j\|^4}\right)^{-1}.
\end{equation}
Moreover, the inequality $|\la z_{1},z_{2}\ra|\le \|z_{1}\|\|z_{2}\|$ holds with equality if and only if $z_{1}$ and $z_{2}$ are linearly dependent.
Since $u_1$ and $u_2$ are linearly independent by the property (D2) of $D$, the inequality is strict. 
Therefore, we obtain \Eq{12}.
The sharpness of this inequality is confirmed by \Thm{main} below, so we omit the proof here.
\end{proof}
\begin{remark}
As can be seen from the proof above, it seems difficult to obtain a good estimate of $\sum_{j\in S_0}(b_j^{(w)})^2$ if $\#V_0>3$.
Indeed, if $\#V_0=4$ and $S_0=\{1,2,3,4\}$, Eq.~\Eq{bj} is rewritten as
\[
\sum_{j\in S_0}(b_j^{(w)})^2
=\left(2+4\cdot\frac{I}{\sum_{j\in S_0}\|z_j\|^4}\right)^{-1}
\]
with
\begin{align*}
I=&\sum_{(i,j)\in\{(1,2),\, (2,3),\, (3,1)\}}(\|z_{i}\|^2\|z_{j}\|^2-\la z_{i},z_{j}\ra^2)\\
&-2\sum_{(i,j,k)\in\{(1,2,3),\, (2,3,1),\, (3,1,2)\}}\la z_i,z_j\ra\la z_j,z_k\ra.
\end{align*}
We may need other functionals to specify the range of $\{b_j^{(w)}\}_{j\in S_0}$ in such a case.
\end{remark}

Let $\tilde D$ denote the restriction of $D$ as a negative definite symmetric operator on $\tilde l(V_0)$.
For $w\in W_*$, let $\tilde A_w$ denote the restriction of $P A_w(= P A_wP)$ as a linear operator on $\tilde l(V_0)$.

For the statement of the main theorem, we recall the concept of strong irreducibility of random matrices.
\begin{definition}
 A set $\mathcal{T}$ of invertible linear operators on $\tilde l(V_0)$ is called strongly irreducible if there does not exist a finite family $L_1,\dots,L_k$ of proper linear subspaces of $\tilde l(V_0)$ such that
 $M(L_1\cup\dots\cup L_k)=L_1\cup\dots\cup L_k$ for all $M\in \mathcal{T}$. 
\end{definition}
\begin{example}
We again consider a canonical harmonic structure on the two-\hspace{0pt}dimensional level $l$ Sierpinski gasket in \Exam{1}. Further, we assume \rom{(A2)}. Then we can prove that $\{\tilde A_i\}_{i\in S}$ is strongly irreducible.
Indeed, from (A2) and the fact that the sequence $\left\{\left(|\det\tilde A_j|^{-1/2}\tilde A_j\right)^n\right\}_{n=1}^\infty$ is unbounded for $j\in S_0$, as shown in the proof of \Thm{main} below, it suffices to prove the following claim by \cite[Part~A, Chapter~II, Proposition~4.3]{BL}: 
\begin{equation}\label{eq:check}
\parbox{0.85\hsize}{For every $x\in \tilde l(V_0)\setminus\{0\}$, the set $\{\tilde A_i^n x\mid i\in S,\ n\in\Z_+\}$ has three elements $y_1,y_2,y_3$ such that $y_j$ and $y_k$ are pairwise linearly independent for $j\ne k$.}
\end{equation}
Let $S_0=\{1,2,3\}$. From the symmetry of the harmonic structure and\break\Lem{A}(1), $\tilde A_1$ has two different eigenvalues and the eigenvectors of $\tilde A_1$ (up to multiplicative constants) are $z_1:={}^t(2,-1,-1)$ and $z_2:={}^t(0,1,-1)$. 
The set of eigenvectors of $\tilde A_1^2$ is the same as that of $\tilde A_1$.
The same claims hold for $\tilde A_2$ with $z_1$ and $z_2$ replaced by ${}^t(-1,2,-1)$ and ${}^t(1,0,-1)$, respectively. 
 Now, let $x\in \tilde l(V_0)\setminus\{0\}$. If $x$ and $z_i$ are linearly dependent for $i=1$ or $2$, any two of $\{x, A_2x, A_2^2x\}$ are linearly independent. Otherwise, any two of $\{x, A_1x, A_1^2x\}$ are linearly independent. 
 Therefore, \Eq{check} follows.
\end{example}
We now resume our discussion of the general situation. The following is the main theorem of this paper.
\begin{thm}\label{th:main}
Assume the conditions {\rm(A1)--(A4)}.
Let $\kp$ be a Borel measure on $\Sg=S^\N$.
We further suppose \emph{either} of the following cases.
\begin{enumerate}[\rm(I)]
\item $\kp$ is an infinite product of a probability measure on $S$ with full support, and $\{\tilde A_i\}_{i\in S}$ is strongly irreducible. 
\item The image measure of $\kp$ by $\pi\colon \Sg\to K$ is absolutely continuous with respect to $\nu$.
\end{enumerate}
Then,
\begin{equation}\label{eq:main}
\lim_{n\to\infty}\sum_{j\in S_0}(b_j^{([\om]_n)})^2=\frac12 \quad\mbox{for $\kp$-a.e.\,$\om$.}
\end{equation}
In particular, the image measure of $\kp$ by the map $\Sg\ni \om\mapsto \{b_j^{([\om]_n)}\}_{j\in S_0}\in l(S_0)\cong l(V_0)$ converges weakly as $n\to\infty$ to a measure that concentrates on the set $\left\{x\in l(V_0)\;\vrule\; \sum_{p\in V_0}x(p)=1\text{ and }|x|_{l(V_0)}^2=1/2\right\}$.
\end{thm}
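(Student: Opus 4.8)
\emph{Plan.}
The plan is to reduce the entire statement, in both cases, to a single fact about the singular values of the matrices $\tilde A_{[\om]_n}$: that their ratio tends to $0$ along $\kp$-a.e.\ $\om$. Condition (A4) makes $\tilde l(V_0)$ two-dimensional, so $\tilde A_w$ has exactly two singular values $s_1(w)\ge s_2(w)>0$, the positivity coming from (A2) (which forces $\tilde A_i$, hence $\tilde A_w$, to be invertible). Taking $S_0'=\{1,2\}$ and using the area-scaling identity in the two-dimensional inner product space $(\tilde l(V_0),\la\cdot,\cdot\ra)$, the Gram determinant in the numerator of \eqref{eq:bj3} factors as
\[
\|z_1\|^2\|z_2\|^2-\la z_1,z_2\ra^2=(\det\tilde A_w)^2\bigl(\|u_1\|^2\|u_2\|^2-\la u_1,u_2\ra^2\bigr),
\]
where the second factor is a strictly positive constant independent of $w$ by property (D2), and $|\det\tilde A_w|=s_1(w)s_2(w)$. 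For the denominator I would record a uniform frame lower bound $\sum_{j\in S_0}\|z_j\|^4\ge c\,s_1(w)^4$, valid because $z_j={}^t\tilde A_w u_j$ and the spanning family $\{u_j\}_{j\in S_0}$ cannot all be nearly $\la\cdot,\cdot\ra$-orthogonal to the dominant singular direction of ${}^t\tilde A_w$. Combining these, the fraction in \eqref{eq:bj3} is $O\bigl((s_2(w)/s_1(w))^2\bigr)$, so in view of \Thm{bj3} it suffices to prove that $s_2(\tilde A_{[\om]_n})/s_1(\tilde A_{[\om]_n})\to0$ for $\kp$-a.e.\ $\om$.

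For case (I) I would apply the Furstenberg theory of i.i.d.\ products directly to the left product $\tilde A_{[\om]_n}=\tilde A_{\om_n}\cdots\tilde A_{\om_1}$ (the singular values are unchanged under transposition). Strong irreducibility of $\{\tilde A_i\}_{i\in S}$ is assumed; the remaining ingredient is a contraction (proximality) property, which I would extract from \Lem{A}(1): for $j\in S_0$ the operator $\tilde A_j$ carries the simple eigenvalue $r_j$ together with a second eigenvalue of strictly smaller modulus, so the normalized powers $(|\det\tilde A_j|^{-1/2}\tilde A_j)^n$ have an eigenvalue of modulus tending to $\infty$ and are thus unbounded. With strong irreducibility and this contraction in hand, Furstenberg's theorem yields that the top Lyapunov exponent $\ell_1$ is simple; since $n^{-1}\log|\det\tilde A_{[\om]_n}|\to\ell_1+\ell_2$ by the law of large numbers, we get $n^{-1}\log s_1\to\ell_1$ and $n^{-1}\log s_2\to\ell_2<\ell_1$, whence $s_2/s_1\to0$ exponentially fast for $\kp$-a.e.\ $\om$.

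For case (II) the measure $\kp$ need not be a product measure, so I would instead use that $\pi_*\kp$ is absolutely continuous with respect to $\nu$, which reduces the claim to $s_2(\tilde A_{[\om]_n})/s_1(\tilde A_{[\om]_n})\to0$ for $\nu$-a.e.\ point. The symmetric matrix $\bigl(d\nu_{h_i,h_k}/d\nu\bigr)_{i,k}$ exists $\nu$-a.e.\ as the limit of the bounded martingales $\nu_{h_i,h_k}(K_{[\om]_n})/\nu(K_{[\om]_n})$, and in terms of the vectors $z_l={}^t\tilde A_{[\om]_n}u_l$ it equals $\lim_n\bigl(\sum_l(z_l,h_i)(z_l,h_k)\bigr)/\sum_l\|z_l\|^2$; since $z\mapsto((z,h_i))_i$ is injective on $\tilde l(V_0)$ and $\{u_l\}$ spans, this limiting matrix has rank one if and only if $s_2/s_1\to0$. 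That it does have rank one $\nu$-a.e.\ is precisely the index-one (martingale dimension one) property of the energy measure $\nu$, which I would invoke from \cite{Hi08,Hi10}; alternatively it can be seen from the decay of the second exterior power of the cocycle forced by $0<r_i<1$.

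Finally, for the ``in particular'' I would argue by tightness. By \eqref{eq:sum1} the points $X_n(\om)=\{b_j^{([\om]_n)}\}_{j\in S_0}$ lie in the compact set $\{x\ge0,\ \sum_{p\in V_0}x(p)=1\}$, so the image measures are tight; since $|X_n|_{l(V_0)}^2=\sum_{j\in S_0}(b_j^{([\om]_n)})^2\to1/2$ $\kp$-a.e.\ while $\sum_{p}X_n(p)=1$ identically, bounded convergence gives $\int(|x|_{l(V_0)}^2-1/2)^2\,d((X_n)_*\kp)\to0$, so every weak limit point is supported on $\{x\mid\sum_p x(p)=1,\ |x|_{l(V_0)}^2=1/2\}$. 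I expect the genuine obstacle to lie in the two convergence statements $s_2/s_1\to0$: in case (I) the delicate point is verifying the contraction hypothesis from \Lem{A} so that Furstenberg's simplicity theorem applies, while in case (II), which falls outside the i.i.d.\ framework, the substantive input is the rank-one degeneracy of the limiting Radon--Nikodym matrix, resting on the earlier results \cite{Hi08,Hi10}. The reduction to singular values and the tightness argument are, by contrast, routine once (A4) places us in a two-dimensional setting.
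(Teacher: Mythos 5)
Your proposal is correct and takes essentially the same route as the paper: the identical reduction of \Eq{bj3} to showing $(\det \tilde A_{[\om]_n})^2/\|\tilde A_{[\om]_n}\|_\op^4\to 0$ for $\kp$-a.e.\ $\om$ (your $s_2/s_1\to 0$), Furstenberg theory from \cite{BL} under strong irreducibility plus the unboundedness of $\{(|\det\tilde A_j|^{-1/2}\tilde A_j)^n\}_{n\ge1}$ supplied by \Lem{A}(1) in Case~(I), and in Case~(II) the martingale-convergence representation of the Radon--Nikodym matrix combined with the index-one (martingale dimension one) result of \cite{Hi08,Hi10,Hi13} to force its rank-one degeneracy. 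The only cosmetic differences are that the paper extracts from \cite{BL} the collapse of the angular distance between $\tilde A_{[\om]_n}x_1$ and $\tilde A_{[\om]_n}x_2$ for linearly independent $x_1,x_2$ rather than simplicity of the top Lyapunov exponent, and deduces the final weak-convergence claim directly from the fact that almost sure convergence implies convergence in law rather than via your tightness argument.
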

The result for Case (I) gives an affirmative answer to \Conj{BHS2}(2).
We remark that the strong irreducibility of $\{\tilde A_i\}_{i\in S}$ is not necessary in Case~(II).
\begin{proof}[Proof of \Thm{main}]
First, we note that $\tilde l(V_0)$ is two-dimensional because of (A4).
Using the same notation as in \Thm{bj3}, with $w=[\om]_n$ for $\om\in\Sg$ and $n\in\N$,
\begin{align*}
\|z_1\|^2\|z_2\|^2-\la z_1,z_2\ra^2
&=|Hz_1|_{l(V_0)}^2|Hz_2|_{l(V_0)}^2-(H z_1,Hz_2)_{l(V_0)}^2\\
&=|Hz_1\wg Hz_2|_{\bigwedge^2\tilde l(V_0)}^2\\
&=(\det H)^2(\det \tilde A_{[\om]_n})^2| u_1\wg  u_2|_{\bigwedge^2\tilde l(V_0)}^2.
\end{align*}
Moreover, since $u_1$ and $u_2$ are linearly independent, the map $B\mapsto (\|Bu_1\|^4+\|Bu_2\|^4)^{1/4}$ provides a norm on the space $\cL(\tilde l(V_0))$ of all linear operators on $\tilde l(V_0)$.
Therefore,
\[
\sum_{j\in S_0}\|z_j\|^4\ge \|\tilde A_{[\om]_n} u_1\|^4+ \|\tilde A_{[\om]_n} u_2\|^4\ge c\|\tilde A_{[\om]_n}\|_\op^4,
\]
where $\|\cdot\|_\op$ represents the operator norm on $\cL(\tilde l(V_0))$ and $c$ is a positive constant independent of $\om$ and $n$.
Then 
\[
0<\frac{\|z_1\|^2\|z_2\|^2-\la z_1,z_2\ra^2}{\sum_{j\in S_0}\|z_j\|^4}
\le \frac{(\det H)^2(\det \tilde A_{[\om]_n})^2 \left| u_1\wg  u_2\right|_{\bigwedge^2\tilde l(V_0)}^2}{c\|\tilde A_{[\om]_n}\|_\op^4}.
\]
By virtue of \Eq{bj3}, Eq.~\Eq{main} follows if we can prove that
\begin{equation}\label{eq:A}
\lim_{n\to\infty}\frac{(\det \tilde A_{[\om]_n})^2}{\|\tilde A_{[\om]_n}\|_\op^4}=0
\quad \mbox{for $\kp$-a.e.\,$\om$.}
\end{equation}

Suppose Case (I) and fix $j\in S_0$. Since $\tilde A_j$ is invertible from (A2) and has two eigenvalues with different moduli from \Lem{A}(1), the sequence\break $\left\{\left(|\det\tilde A_j|^{-1/2}\tilde A_j\right)^n\right\}_{n=1}^\infty$ is unbounded. 
Together with strong irreducibility of $\{A_i\}_{i\in S}$, Furstenberg's theorem (cf.~\cite[Part A, Chapter II, Theorems~4.1 and 3.6]{BL}) implies that for two elements $x_1,x_2\in \tilde l(V_0)$ which are linearly independent, 
\[
  \lim_{n\to\infty}\dl(\tilde A_{[\om]_n}x_1,\tilde A_{[\om]_n}x_2)=0
  \quad\text{for $\kp$-a.e.\,$\om$}.
\]
Here, $\dl(\cdot,\cdot)\in[0,1]$ denotes the angular distance, that is,
\[
 \dl(y_1,y_2)=\sqrt{1-\left(\frac{y_1}{|y_1|_{l(V_0)}},\frac{y_2}{|y_2|_{l(V_0)}}\right)_{l(V_0)}^2} \quad \text{for }y_1,y_2\in\tilde l(V_0)\setminus\{0\}.
\]
Let $x_1,x_2$ be an orthonormal basis of the inner product space $(\tilde l(V_0),\ (\cdot,\cdot)_{l(V_0)})$.
Then
\[
\dl(\tilde A_{[\om]_n}x_1,\tilde A_{[\om]_n}x_2)
=\frac{|\det \tilde A_{[\om]_n}|}{|\tilde A_{[\om]_n}x_1|_{l(V_0)}|\tilde A_{[\om]_n}x_2|_{l(V_0)}}
\ge\frac{|\det \tilde A_{[\om]_n}|}{\|\tilde A_{[\om]_n}\|_\op^2}.
\]
Thus we obtain \Eq{A}.

Next, suppose Case (II).
Again let $x_1,x_2$ be an orthonormal basis of $\tilde l(V_0)$.
Define $\hat h_i=\iota(x_i)$ for $i=1,2$ and $\hat\nu=\nu_{\hat h_1}+\nu_{\hat h_2}$.
Since the linear span of $\hat h_1$, $\hat h_2$, and constant functions is $\cH$, $\nu$ and $\hat \nu$ are mutually absolutely continuous from \Lem{abs}. Therefore, we may assume that the image measure of $\kp$ by the mapping $\pi$ coincides with $\hat\nu$ for proving \Eq{A}.\footnote{Since $\hat\nu(V_*)=0$, such a $\kp$ is uniquely identified. More specifically, for $m\in\N$ and $A\subset W_m$, $\kp(\{\om\in\Sg\mid[\om]_m\in A\})$ is given by $\sum_{k=1}^2 \sum_{w\in A}2r_w^{-1}\cE(\hat h_k\circ\psi_w,\hat h_k\circ\psi_w)$ from \Lem{b}.}
In our situation, the index (\cite[Definition~2.9]{Hi10}) of the Dirichlet form under consideration is $1$ (see \cite[Proposition~3.4]{Hi08} or \cite[Theorem~4.10]{Hi13} for the proof; see also \cite{Ku89}). 
In particular, we have
\[
\rank\left(\frac{d\nu_{\hat h_i,\hat h_j}}{d\hat\nu}\right)_{i,j=1}^2\le 1
\quad\mbox{$\hat\nu$-a.e.}\footnote{In fact, the equality holds from \cite[Proposition~2.11]{Hi10}.}
\]
On the other hand, for $i,j\in\{1,2\}$, 
\[
\frac{d\nu_{\hat h_i,\hat h_j}}{d\hat\nu}(\pi(\om))
=\lim_{n\to\infty}\frac{\nu_{\hat h_i,\hat h_j}(K_{[\om]_n})}{\hat\nu(K_{[\om]_n})}
=\lim_{n\to\infty}\frac{-\,{}^t\!x_i\,^t\!\tilde A_{[\om]_n}\tilde D\tilde A_{[\om]_n}x_j}{\tr\left(-\,{}^t\!\tilde A_{[\om]_n}\tilde D\tilde A_{[\om]_n}\right)}
\quad\mbox{for $\kp$-a.e.\,$\om$},
\]
where the first equality follows from the martingale convergence theorem and the second one is due to \Lem{b}.
Then, for $\kp$-a.e.\,$\om$,
\begin{align*}
0&=\lim_{n\to\infty}\det\left(\frac{-\,{}^t\!\tilde A_{[\om]_n}\tilde D\tilde A_{[\om]_n}}{\tr\left(-\,{}^t\!\tilde A_{[\om]_n}\tilde D\tilde A_{[\om]_n}\right)}\right)\\
&=\lim_{n\to\infty}\frac{\left(\det\tilde A_{[\om]_n}\right)^2\det(- \tilde D)}{\left\|\sqrt{ -\tilde D}\tilde A_{[\om]_n}\right\|_{\HS}^4}\\
&\ge\lsup_{n\to\infty}\frac{\left(\det\tilde A_{[\om]_n}\right)^2\det(- \tilde D)}{c'\|\tilde A_{[\om]_n}\|_{\op}^4},
\end{align*}
where $\|\cdot\|_{\HS}$ denotes the Hilbert--Schmidt norm on $\cL(\tilde l(V_0))$ and $c'$ is a positive constant depending only on $D$.
Thus, \Eq{A} holds.
 
The last claim of the theorem follows from the general fact that almost sure convergence implies convergence in law.
\end{proof}
\section{Concluding remarks}
We give some comments as concluding remarks.
\begin{enumerate}
\item As can be seen from the proof, Condition (A4) is crucial for \Thm{bj3} and thus for \Thm{main}. It may be an interesting problem to provide an appropriate formulation when $\# V_0>3$.
\item In both cases (I) and (II) in \Thm{main}, $\kp$ has no mass on $\pi^{-1}(V_*)$.
Therefore, the statements of \Thm{main} and \Conj{BHS2} can be rephrased in terms of a measure on $K$ instead of the measure $\kp$ on $\Sg$: that is, self-similar measures on $K$ in Case~(I) and $\nu$ in Case~(II), respectively.
\item In \Thm{main}, the measure $\kp$ of Case (I) and that of Case (II) are mutually singular in many cases (cf.~\cite[Theorem~2]{HN06}). Case~(II) looks like a more natural formulation in the sense that there is no need for the extra assumption of the strong irreducibility of $\{A_j\}_{j\in S}$ and because the concept of the index of Dirichlet forms, which also has probabilistic interpretations~\cite{Hi10,Hi13b}, appears naturally in the proof.
\item At the moment, there are no clues concerning \Conj{BHS2}(1). The distribution of $P_m$ with $m=13$ is given in the left-hand graph of \Fig{2}.
\begin{figure}\centering
\includegraphics[width=\hsize]{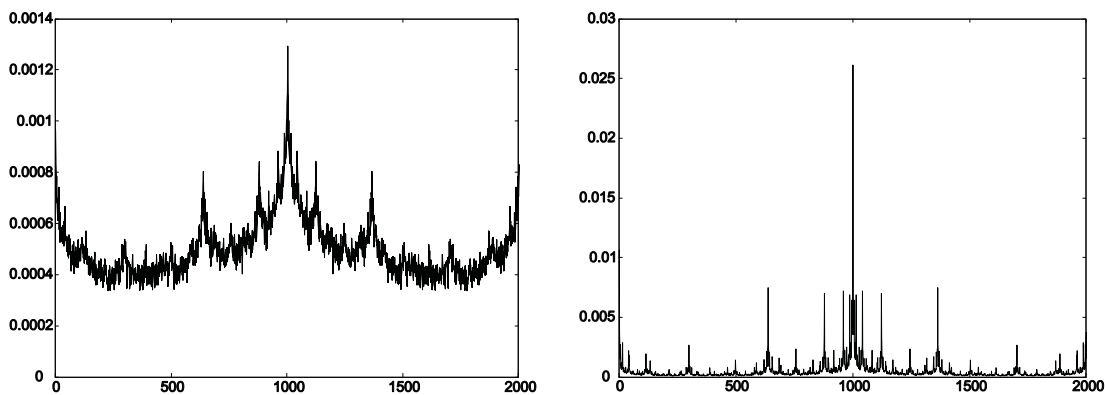}
\caption{Angular distributions}\label{fig:2}
\end{figure}
This figure shows the normalized histogram (2000 slices) of $P_{13}$ on $[-\pi/3,\pi/3]$ for the Dirichlet form on $SG_2$ assumed in \Conj{BHS2}; because of the symmetry it suffices to consider only this interval.\footnote{The histogram of $P_{13}$ with 100 slices is also provided in \cite[Figure~7.4]{BHS}; the figure there corresponds to the right half of ours. However, with only 100 slices the graph does not seem to reflect the irregular behavior of the distribution of $P_m$ very precisely.}
This figure supports the validity of \Conj{BHS2}(1).
On the other hand, the right-hand graph of \Fig{2} shows the distribution on $[-\pi/3,\pi/3]$ of the image measure of $\kp$ by the map $\Sg\ni \om\mapsto \theta(\{b_j^{([\om]_{m})}\}_{j=1}^3)\in (-\pi,\pi]$ with $m=13$, where $\kp$ is taken so that its image measure by the map $\pi\colon\Sg\to K$ is equal to the measure~$\nu$ given in \Thm{b}.  
Here, the interval $[-\pi/3,\pi/3]$ is again divided into 2000~slices.
The distribution looks very different and the possible limit measure as $m\to\infty$ might be singular with respect to the Lebesgue measure.
\end{enumerate}
\frenchspacing

\end{document}